\pgfplotsset{compat=newest}
\tikzset
{
    treenode/.style = {circle, draw=black, align=center, minimum size=1cm},
    subtree/.style  = {isosceles triangle, draw=black, align=center, minimum height=0.5cm, minimum width=1cm, shape border rotate=90, anchor=north}
}
\tikzset{
	level 1/.style = {sibling distance = 1.5cm},
	level 2/.style = {sibling distance = 0.8cm},
    level distance = 0.9 cm
}
\tikzstyle{snakeline} = [decorate, decoration={snake, amplitude=.4mm, segment length=2mm}]
\tikzset{every tree node/.style={minimum width=0.1cm,draw,circle},
         blank/.style={draw=none},
         edge from parent/.style=
         {draw,edge from parent path={(\tikzparentnode) -- (\tikzchildnode)}},
         level distance=0.8cm}
\newtheorem{theorem}{Theorem}
\newtheorem{corollary}[theorem]{Corollary}
\newtheorem{proposition}[theorem]{Proposition}
\newtheorem{lemma}[theorem]{Lemma}
\newtheorem{definition}[theorem]{Definition}
\newtheorem{example}[theorem]{Example}
\newcommand{\ssuc}{{\rm simsuc\,}}
\newcommand{\plrm}{{\rm plrmin\,}}
\newcommand{\Fix}{{\rm Fix\,}}
\newcommand{\Aexc}{{\rm Drop\,}}
\newcommand{\md}{\mathcal{D}}
\newcommand{\val}{{\rm val}}
\newcommand{\suc}{{\rm suc\,}}
\newcommand{\basc}{{\rm basc\,}}
\newcommand{\lrm}{{\rm lrmin\,}}
\newcommand{\cda}{{\rm cda\,}}
\newcommand{\drop}{{\rm drop\,}}
\newcommand{\dasc}{{\rm dasc\,}}
\newcommand{\pk}{{\rm pk\,}}
\newcommand{\ddes}{{\rm ddes\,}}
\newcommand{\des}{{\rm des\,}}
\newcommand{\Exc}{{\rm Exc\,}}
\newcommand{\exc}{{\rm exc\,}}
\newcommand{\aexc}{{\rm drop\,}}
\newcommand{\cyc}{{\rm cyc\,}}
\newcommand{\fix}{{\rm fix\,}}
\newcommand{\mdn}{\mathcal{D}}
\newcommand{\msn}{\mathfrak{S}_n}
\newcommand{\rss}{\mathcal{SS}}
\newcommand{\ms}{\mathfrak{S}}
\newcommand{\rs}{\mathcal{RS}}
\newcommand{\lrf}[1]{\lfloor #1\rfloor}
\newcommand{\mq}{\mathcal{Q}}
\newcommand{\mqn}{\mathcal{Q}_n}
\newcommand{\asc}{{\rm asc\,}}
\newcommand{\stirling}[2]{\genfrac{[}{]}{0pt}{}{#1}{#2}}
\title{On the joint distributions of succession and Eulerian statistics}
\author[S.-M.~Ma]{Shi-Mei Ma}
\address{School of Mathematics and Statistics,
        Northeastern University at Qinhuangdao,
         Hebei 066000, P.R. China}
\email{shimeimapapers@163.com (S.-M. Ma)}
\author[H.~Qi]{Hao Qi}
\address{College of Mathematics and Physics, Wenzhou University, Wenzhou 325035, P.R. China}
\email{qihao@wzu.edu.cn(H.~Qi)}
\author[J.~Yeh]{Jean Yeh}
\address{Department of Mathematics, National Kaohsiung Normal University, Kaohsiung 82444, Taiwan}
\email{chunchenyeh@nknu.edu.tw}
\author[Y.-N. Yeh]{Yeong-Nan Yeh}
\address{College of Mathematics and Physics, Wenzhou University, Wenzhou 325035, P.R. China}
\email{mayeh@math.sinica.edu.tw (Y.-N. Yeh)}
\subjclass[2010]{Primary 05A05; Secondary 05A19}
\begin{document}

\maketitle
\begin{abstract}
The motivation of this paper is to investigate the joint distribution of succession and Eulerian statistics.
We first investigate the enumerators for the joint distribution of descents, big ascents and successions over all permutations in the symmetric group.
As an generalization a result of Diaconis-Evans-Graham (Adv. in Appl. Math., 61 (2014), 102--124),
we show that two triple set-valued statistics of permutations are equidistributed on symmetric groups.
We then introduce the definition of proper left-to-right minimum, and discover that the joint distribution
of the succession and proper left-to-right minimum statistics over permutations is a symmetric distribution.
In the final part, we discuss the relationship between the $\fix$ and $\cyc$ $(p,q)$-Eulerian polynomials and
the joint distribution of succession and Eulerian-type statistics. In particular, we give a concise derivation of the generating function for a
six-variable Eulerian polynomials.
\bigskip

\noindent{\sl Keywords}: Eulerian polynomials; Fixed points; Successions; Proper left-to-right minima
\end{abstract}
\date{\today}

\tableofcontents
\section{Introduction}
Let $\msn$ denote the symmetric group of all permutations of $[n]$, where $[n]=\{1,2,\ldots,n\}$.
As usual, we write $\pi=\pi(1)\pi(2)\cdots\pi(n)\in\msn$.
A {\it fixed point} of $\pi\in\msn$ is an index $k\in [n]$ such that $\pi(k)=k$. Let $\fix(\pi)$ be the number of fixed points of $\pi$.
We say that $\pi$ is a {\it derangement} if it has no fixed points.
Denote by $\mdn_n$ the set of all derangements in $\msn$, and let $d_n=\#\mdn_n$ be the {\it derangement number}. It is well known that
\begin{equation}\label{dn-explicit}
d_n=n!\sum_{i=0}^n\frac{(-1)^i}{i!}.
\end{equation}
Derangements have been studied from various perspectives, see~\cite{Wachs93,Zhang22} for surveys on this topic. For example,
D\'{e}sarm\'{e}nien-Wachs~\cite{Wachs93} constructed a bijection between descent classes of derangements and descent classes of
desarrangements (a desarrangement is a permutation whose first ascent is even). Recently,
Gustafsson-Solus~\cite{Gustafsson20} investigated the geometric interpretation of derangement polynomials.

The enumeration of finite sequences according to the number of successions was initiated by Kaplansky and Riordan in the 1940s~\cite{Kaplansky,Riordan45}.
There are several variants of successions and they have been extensively studied on various structures, including permutations, 
increasing trees, set partitions, compositions and integer partitions, see~\cite{Brenti18,Diaconis14,Dymacek,Mansour16,Tanny} for instances.
A {\it succession} of $\pi\in\msn$ is an index $k\in [n-1]$ such that $\pi(k+1)=\pi(k)+1$, and $\pi(k)$ is called a {\it succession value}.
Let $\suc(\pi)$ be the number of successions of $\pi$.
The joint distribution of ascents and successions over permutations has been explored by Roselle~\cite{Roselle68} and Dymacek-Roselle~\cite{Dymacek}.
Let $q_n=\#\{\pi\in\msn:~\suc(\pi)=0\}$.
According to~\cite[Eq~(3.8)]{Roselle68}, one has
\begin{equation}\label{qndn}
q_n=d_n+d_{n-1}.
\end{equation}

Following~\cite{Brualdi92}, a {\it relative derangement} on $[n]$ is a permutation in $\msn$ with no successions.
Using the principle of inclusion and exclusion,
Brualdi~\cite[Theorem~6.5.1]{Brualdi92} independently found that
$$q_n=(n-1)!\sum_{i=0}^n\frac{(-1)^i(n-i)}{i!}.$$
Combining this explicit formula with~\eqref{dn-explicit},
he rediscovered the identity~\eqref{qndn}. A combinatorial interpretation of~\eqref{qndn} has been obtained by Chen~\cite{Chen96} by introducing skew derangements.

Recently, Diaconis-Evans-Graham~\cite{Diaconis14} found that
for all $I\subseteq [n-1]$, one has
\begin{equation}\label{Graham}
\begin{aligned}
&\#\{\pi\in\msn: \{k\in [n-1]: \pi(k+1)=\pi(k)+1\}= I\}\\
&=\#\{\pi\in\msn: \{k\in [n-1]: \pi(k)=k\}= I\}.
\end{aligned}
\end{equation}
They presented three different proofs of it, including an enumerative proof, a Markov chain proof and a bijective proof.
In~\cite{Brenti18}, Brenti-Marietti extended the notion of succession for ordinary permutations to
adjacent ascent of colored permutations.

The organization of this paper is as follows. In Section~\ref{section02}, we collect
the definitions and preliminary results that will be used in the sequel.
In Section~\ref{section03}, we investigate the enumerators for the joint distribution of descents, big ascents and successions over all permutations in the symmetric group.
As an generalization of~\eqref{Graham},
we show that two triple set-valued statistics of permutations are equidistributed.
Then we introduce the definition of proper left-to-right minimum.
Let $\plrm(\pi)$ and $\cyc(\pi)$ denote the numbers of proper left-to-right minima and cycles of $\pi$, respectively.
In Section~\ref{section04}, we study the relationship between the $\fix$ and $\cyc$ $(p,q)$-Eulerian polynomials and
the joint distribution of succession and Eulerian statistics.
As an illustration, a special case of Theorem~\ref{Anpq} says that
\begin{equation*}
\sum_{\pi\in\ms_{n+1}}s^{\suc(\pi)}t^{\plrm(\pi)}=\sum_{\pi\in\msn}{\left(\frac{t+s}{2}\right)}^{\fix(\pi)}2^{\cyc(\pi)},
\end{equation*}
which implies that $(\suc,~\plrm)$ is a symmetric distribution over permutations.
\section{Notation and preliminary results}\label{section02}
There has been much work on the symmetric expansions of polynomials, see~\cite{Athanasiadis20,Lin21,Ma2401,Ma2402} and references therein.
Let $f(x)=\sum_{i=0}^nf_ix^i$ be a polynomial with real coefficients.
If $f(x)$ is symmetric, i.e., $f_i=f_{n-i}$ for all indices $0\leqslant i\leqslant n$,
then it can be expanded uniquely as
$$f(x)=\sum_{k=0}^{\lrf{{n}/{2}}}\gamma_kx^k(1+x)^{n-2k},$$
and it is said to be {\it $\gamma$-positive}
if $\gamma_k\geqslant 0$ for all $0\leqslant k\leqslant \lrf{{n}/{2}}$.
We say that the polynomial $f(x)$ is {\it spiral} if
$$f_n\leqslant f_0\leqslant f_{n-1}\leqslant f_1\leqslant \cdots\leqslant f_{\lrf{n/2}}.$$
and it is said to be {\it alternatingly increasing} if
$$f_0\leqslant f_n\leqslant f_1\leqslant f_{n-1}\leqslant\cdots \leqslant f_{\lrf{{(n+1)}/{2}}}.$$
If $f(x)$ is spiral and $\deg f(x)=n$, then $x^nf(1/x)$ is alternatingly increasing, and vice versa.
From~\cite[Remark~2.5]{Beck2010}, we see that $f(x)$ has a unique decomposition $f(x)= a(x)+xb(x)$, where
\begin{equation}\label{ax-bx-prop01}
a(x)=\frac{f(x)-x^{n+1}f(1/x)}{1-x},~b(x)=\frac{x^nf(1/x)-f(x)}{1-x}.
\end{equation}
When $f(0)\neq0$, we have $\deg a(x)=n$ and $\deg b(x)\leqslant n-1$. Note that $a(x)$ and $b(x)$ are both symmetric.
We call the ordered pair of polynomials $(a(x),b(x))$ the {\it symmetric decomposition} of $f(x)$.
Br\"and\'en-Solus~\cite{Branden21} pointed out that
$f(x)$ is alternatingly increasing if and only if the pair of polynomials in its symmetric decomposition are both unimodal
and have only nonnegative coefficients. Following~\cite[Definition~1.2]{Ma2401}, the polynomial $f(x)$ is said to be {\it bi-$\gamma$-positive} if
$a(x)$ and $b(x)$ are both $\gamma$-positive. Thus bi-$\gamma$-positivity is stronger than alternatingly increasing property, see~\cite{Athanasiadis20,Branden22,Han21,Ma2401} for the recent progress on this subject.
In this paper, we shall present several new $\gamma$-positive or bi-$\gamma$-positive polynomials.

Let $\pi\in\msn$. A {\it descent} (resp.~{\it ascent, excedance}) of $\pi$ is an index $i\in[n-1]$
such that $\pi(i)>\pi(i+1)$ (resp.~$\pi(i)<\pi(i+1)$, $\pi(i)>i$). Let $\des(\pi)$ (resp.~$\asc(\pi)$, $\exc(\pi)$) denote the number of descents (resp.~ascents, excedances) of $\pi$.
It is well known that descents, ascents and excedances are equidistributed over the symmetric groups,
and their common enumerative polynomials are the {\it Eulerian polynomials} $A_n(x)$, i.e.,
$$A_n(x)=\sum_{\pi\in\msn}x^{\des(\pi)}=\sum_{\pi\in\msn}x^{\asc(\pi)}=\sum_{\pi\in\msn}x^{\exc(\pi)}.$$
The {\it derangement polynomials} are defined by $$d_n(x)=\sum_{\pi\in\mdn_n}x^{\exc(\pi)}.$$
In the theory of subdivisions of simplicial complexes, the Eulerian polynomial $A_n(x)$ arises as the
$h$-polynomial of the barycentric subdivision of a simplex and derangement polynomial $d_n(x)$
as its local $h$-polynomial, see~\cite{Gustafsson20,Stanley92} for details.

Below are the first few Eulerian and derangement polynomials:
$$A_0(x)=A_1(x)=1,~A_2(x)=1+x,~A_3(x)=1+4x+x^2,~A_4(x)=1+11x+11x^2+x^3;$$
$$d_0(x)=1,~d_1(x)=0,~d_2(x)=x,~d_3(x)=x+x^2,~d_4(x)=x+7x^2+x^3.$$
The generating function of $d_n(x)$ is given as follows (see~\cite[Proposition~6]{Brenti90}):
\begin{equation}\label{dxz-EGF}
d(x;z)=\sum_{n=0}^\infty d_n(x)\frac{z^n}{n!}=\frac{1-x}{\mathrm{e}^{xz}-x\mathrm{e}^{z}}.
\end{equation}

We say that an index $i$ is a {\it double descent} of $\pi\in\msn$ if $\pi(i-1)>\pi(i)>\pi(i+1)$, where $\pi(0)=\pi(n+1)=0$.
Foata-Sch\"utzenberger~\cite{Foata70} discovered the following remarkable result:
\begin{equation}\label{Anx-gamma}
A_n(x)=\sum_{i=0}^{\lrf{(n-1)/2}}\gamma_{n,i}x^i(1+x)^{n-1-2i},
\end{equation}
where $\gamma_{n,i}$ is the number of permutations in $\msn$ with $i$ descents and have no double descents.
Let $\cda(\pi)=\#\{i:\pi^{-1}(i)<i<\pi(i)\}$ be the number of {\it cycle double ascents} of $\pi$.
Using the theory of continued fractions, Shin-Zeng~\cite[Theorem~11]{Zeng12} obtained that
\begin{equation}\label{dnxq-gamma}
d_n(x,q)=\sum_{\pi\in\md_n}x^{\exc(\pi)}q^{\cyc(\pi)}=\sum_{k=1}^{\lrf{n/2}}\sum_{\pi\in \md_{n,k}}q^{\cyc(\pi)}x^{k}(1+x)^{n-2k}.
\end{equation}
where $\md_{n,k}=\{\pi\in \msn: \fix(\pi)=0,~\cda(\pi)=0,~\exc(\pi)=k\}$.
When $q=1$, it follows from~\eqref{dnxq-gamma} that $d_n(x)$ is $\gamma$-positive.

Let $P(n,r,s)$ be the number of permutations in $\msn$ with $r$ ascents and $s$ successions.
Roselle~\cite[Eq.~(2.1)]{Roselle68} proved that
$$P(n,r,s)=\binom{n-1}{s}P(n-s,r-s,0).$$
Let $\mqn$ be the set of permutations in $\msn$ with no successions.
Let $P^*_n(x)=\sum_{r=1}^{n-1}P^*(n,r)x^r$, where $P^*(n,r)=\#\{\pi\in\mqn: \asc(\pi)=r-1,~\pi(1)>1\}$.
Following~\cite[Eq.~(4.3)]{Roselle68}, one has
\begin{equation}\label{EGF1}
\sum_{n=0}^{\infty}P^*_n(x)\frac{z^n}{n!}=\frac{1-x}{\mathrm{e}^{xz}-x\mathrm{e}^{z}}.
\end{equation}
Comparing~\eqref{EGF1} with~\eqref{dxz-EGF}, one can immediately find that
\begin{equation}\label{Pndnx}
P^*_n(x)=d_n(x).
\end{equation}
The ascent polynomials over $\mqn$ are defined by $P_n(x)=\sum_{\pi\in\mqn}x^{\asc(\pi)+1}$.
Using~\cite[Eq.~(3.8)]{Roselle68}, we arrive at
\begin{equation}\label{Pnx-decom}
P_n(x)=P^*_n(x)+xP^*_{n-1}(x)=d_n(x)+xd_{n-1}(x).
\end{equation}
When $x=1$, it reduces to~\eqref{qndn}. As $d_n(x)$ is $\gamma$-positive, we immediately find the following result.
\begin{proposition}\label{Pnx-bi-gamma}
The polynomials $P_n(x)$ are bi-$\gamma$-positive.
\end{proposition}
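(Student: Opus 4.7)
The plan is to identify the symmetric decomposition of $P_n(x)$ explicitly as the pair $(d_n(x),\,d_{n-1}(x))$, and then invoke the $\gamma$-positivity of the derangement polynomials.

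First, I would record the fact that each $d_k(x)$ is palindromic of nominal degree $k$, in the sense that $d_k(x)=x^{k}d_k(1/x)$. This follows from~\eqref{dnxq-gamma} at $q=1$: writing $d_k(x)$ as a nonnegative combination of the basis elements $x^{j}(1+x)^{k-2j}$, and noting that $x^{k}\bigl[x^{j}(1+x)^{k-2j}\bigr]_{x\mapsto 1/x}=x^{j}(1+x)^{k-2j}$, shows that every summand, and hence $d_k(x)$ itself, is palindromic of nominal degree $k$ and $\gamma$-positive.

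Next, I would insert $P_n(x)=d_n(x)+xd_{n-1}(x)$ from~\eqref{Pnx-decom} into the formulas of~\eqref{ax-bx-prop01} applied with nominal degree $n$. Using the palindromicities $x^{n+1}d_n(1/x)=x\,d_n(x)$ and $x^{n}d_{n-1}(1/x)=x\,d_{n-1}(x)$, a brief cancellation gives
\[
P_n(x)-x^{n+1}P_n(1/x)=(1-x)d_n(x),\qquad x^{n}P_n(1/x)-P_n(x)=(1-x)d_{n-1}(x),
\]
so that $a(x)=d_n(x)$ and $b(x)=d_{n-1}(x)$ form the symmetric decomposition of $P_n(x)$. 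Since both $d_n(x)$ and $d_{n-1}(x)$ are $\gamma$-positive by the previous step, the polynomial $P_n(x)$ is bi-$\gamma$-positive by definition.

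There is no serious obstacle, but one subtlety deserves attention: $P_n(x)$ vanishes at $0$, so one must apply~\eqref{ax-bx-prop01} with the nominal degree $n$ rather than the actual degree $n-1$ of $P_n(x)$. The palindromicity of $d_n(x)$ and $d_{n-1}(x)$ at the respective nominal degrees $n$ and $n-1$ is precisely the property that makes this nominal choice produce a valid symmetric decomposition, and it is also what drives the cancellation in the step above.
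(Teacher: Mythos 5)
Your argument is correct and follows the same route as the paper's: the paper obtains the identity $P_n(x)=d_n(x)+xd_{n-1}(x)$ from \eqref{Pnx-decom}, reads it off as the symmetric decomposition of $P_n(x)$, and invokes the $\gamma$-positivity of derangement polynomials from \eqref{dnxq-gamma}. You simply supply the (correct) verification that $(a(x),b(x))=(d_n(x),d_{n-1}(x))$ via the palindromicity $d_k(x)=x^kd_k(1/x)$ and the formulas \eqref{ax-bx-prop01} taken with nominal degree $n$, which the paper treats as immediate.
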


A {\it drop} of $\pi\in\msn$ is an index $i$ such that $\pi(i)<i$.
Let $\drop(\pi)$ denote the number of drops of $\pi$.
For $\pi\in\mdn_n$, it is clear that $\exc(\pi)+\aexc(\pi)=n$.
The bivariate derangement polynomials are defined by $$d_n(x,y)=\sum_{\pi\in\mdn_n}x^{\exc(\pi)}y^{\aexc(\pi)}.$$
It follows from~\eqref{dxz-EGF} that
\begin{equation}\label{dxyz-EGF}
d(x,y;z)=\sum_{n=0}^\infty d_n(x,y)\frac{z^n}{n!}=\frac{y-x}{y\mathrm{e}^{xz}-x\mathrm{e}^{yz}}.
\end{equation}
Define
$$C_n(x,y,s)=\sum_{\pi\in\msn}x^{\exc(\pi)}y^{\aexc(\pi)}s^{\fix(\pi)}.$$
In particular, $d_n(x,y)=C_n(x,y,0)$ and $A_n(x)=C_n(x,1,1)$.
Since $$C_n(x,y,s)=\sum_{i=0}^n\binom{n}{i}s^id_{n-i}(x,y),$$ it follows that
\begin{equation}\label{dxys-EGF}
C(x,y,s;z)=\sum_{n=0}^{\infty}C_n(x,y,s)\frac{z^n}{n!}=\frac{(y-x)\mathrm{e}^{sz}}{y\mathrm{e}^{xz}-x\mathrm{e}^{yz}}.
\end{equation}
Define
$${A}_n(x,y)=C_n(x,y,y)=\sum_{\pi\in\msn}x^{\exc(\pi)}y^{\drop(\pi)+\fix(\pi)}.$$
We have
\begin{equation}\label{Anxy02}
\sum_{n=0}^\infty{A}_n(x,y)\frac{z^n}{n!}=\frac{(y-x)\mathrm{e}^{yz}}{y\mathrm{e}^{xz}-x\mathrm{e}^{yz}}.
\end{equation}
\begin{definition}
Let $\pi\in\msn$.
A {\it big ascent} of $\pi$ is an index $i\in [n-1]$ such that $\pi(i+1)\geqslant \pi(i)+2$.
The number of {\it big ascents} of $\pi$ is defined by
$\basc(\pi)=\#\{i\in [n-1]:~\pi(i+1)\geqslant \pi(i)+2\}$.
\end{definition}
It is clear that $\asc(\pi)=\suc(\pi)+\basc(\pi)$. When $\pi\in\mqn$, one has $\basc(\pi)=\asc(\pi)$.
We now define the following {\it trivariate Eulerian polynomials}
\begin{equation}\label{AnxysDef}
A_n(x,y,s)=\sum_{\pi\in\msn}x^{\basc(\pi)}y^{\des(\pi)}s^{\suc(\pi)}.
\end{equation}
In particular, $A_n(x)=A_n(x,1,x)=A_n(1,x,1)$.
Below are these polynomials for $n\leqslant 5$:
\begin{align*}
A_0(x,y,s)&=A_1(x,y,s)=1,~
A_2(x,y,s)=s+y,\\
A_3(x,y,s)&=(s+y)^2+2xy,~
A_4(x,y,s)=(s+y)^3+6xy(s+y)+2xy(x+y),\\
A_5(x,y,s)&=(s+y)^4+12xy(s+y)^2+8xy(s+y)(x+y)+2xy(x+y)^2+16x^2y^2.
\end{align*}
Define
$$A:=A(x,y,s;z)=\sum_{n=0}^{\infty}A_{n+1}(x,y,s)\frac{z^n}{n!}.$$
Note that $\des(\pi)=n-1-\suc(\pi)-\asc(\pi)$ for $\pi\in\msn$. Combining this with~\cite[Eq.~(5.9)]{Roselle68} and~\cite[Eq.~(6.9)]{Roselle68}, it is routine to deduce that
\begin{equation}\label{A-EGF}
A=\mathrm{e}^{z(y+s)}\left(\frac{y-x}{y\mathrm{e}^{xz}-x\mathrm{e}^{yz}}\right)^2,
\end{equation}
which can be verified by using~\eqref{recu-Anxys02}. In Corollary~\ref{Corend}, we give a generalization of~\eqref{A-EGF}.
It should be noted that~\eqref{A-EGF} can be seen as a special case of~\cite[Theorem~1]{Zeng1993}.

Comparing~\eqref{A-EGF} with~\eqref{dxyz-EGF},~\eqref{dxys-EGF} and~\eqref{Anxy02}, we get the following result.
\begin{proposition}
For $n\geqslant 0$, we have
\begin{align*}
A_{n+1}(x,y,s)&=\sum_{i=0}^n\binom{n}{i}{A}_i(x,y)C_{n-i}(x,y,s),\\
A_{n+1}(x,y,-y)&=\sum_{i=0}^n\binom{n}{i}{d}_i(x,y)d_{n-i}(x,y).
\end{align*}
In particular,
\begin{align}
A_{n+1}(x,1,0)&=\sum_{i=0}^n\binom{n}{i}A_i(x)d_{n-i}(x),\label{AD}\\
A_{n+1}(x,1,1)&=\sum_{i=0}^n\binom{n}{i}A_i(x)A_{n-i}(x),\label{AA}.
\end{align}
\end{proposition}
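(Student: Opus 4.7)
The plan is to prove the proposition by coefficient extraction from a factorization of the generating function \eqref{A-EGF}. Everything needed is already on the page; the main observation is that the exponential factor $\mathrm{e}^{z(y+s)}$ splits as $\mathrm{e}^{yz}\cdot\mathrm{e}^{sz}$, which allows the right-hand side of \eqref{A-EGF} to be written as a product of two previously identified generating functions.

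More precisely, I would rewrite
\[
A(x,y,s;z)=\mathrm{e}^{z(y+s)}\!\left(\frac{y-x}{y\mathrm{e}^{xz}-x\mathrm{e}^{yz}}\right)^{\!2}
=\frac{(y-x)\mathrm{e}^{yz}}{y\mathrm{e}^{xz}-x\mathrm{e}^{yz}}\cdot\frac{(y-x)\mathrm{e}^{sz}}{y\mathrm{e}^{xz}-x\mathrm{e}^{yz}},
\]
and observe that by \eqref{Anxy02} the first factor is the exponential generating function of $A_n(x,y)$, while by \eqref{dxys-EGF} the second factor is the exponential generating function of $C_n(x,y,s)$. Equating coefficients of $z^n/n!$ via the standard Cauchy product for exponential generating functions yields
\[
A_{n+1}(x,y,s)=\sum_{i=0}^{n}\binom{n}{i}A_i(x,y)\,C_{n-i}(x,y,s),
\]
which is the first stated identity. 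For the second identity, specialize $s=-y$; then $\mathrm{e}^{z(y+s)}=1$ and, in view of \eqref{dxyz-EGF}, each factor of the squared expression is the exponential generating function of $d_n(x,y)$, so the same Cauchy product gives $A_{n+1}(x,y,-y)=\sum_{i=0}^{n}\binom{n}{i}d_i(x,y)d_{n-i}(x,y)$.

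The two particular cases \eqref{AD} and \eqref{AA} then follow from the general formula by specialization. Setting $y=1$ in the definitions gives $A_i(x,1)=\sum_{\pi\in\ms_i}x^{\exc(\pi)}=A_i(x)$, and further setting $s=0$ in $C_{n-i}(x,y,s)$ forces $\fix(\pi)=0$, giving $C_{n-i}(x,1,0)=d_{n-i}(x)$, which yields \eqref{AD}; setting $s=1$ instead gives $C_{n-i}(x,1,1)=A_{n-i}(x)$, which yields \eqref{AA}. There is no real obstacle here beyond verifying the identity \eqref{A-EGF} itself, which the paper indicates can be checked from the recursion and references Roselle's formulas; once \eqref{A-EGF} is in hand, the proposition is a direct manipulation of generating functions.
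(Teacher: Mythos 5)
Your proof is correct and follows essentially the same route the paper takes: splitting $\mathrm{e}^{z(y+s)}=\mathrm{e}^{yz}\mathrm{e}^{sz}$ so that \eqref{A-EGF} factors as the product of the exponential generating functions \eqref{Anxy02} and \eqref{dxys-EGF} (respectively \eqref{dxyz-EGF} squared when $s=-y$), then reading off the binomial convolution and specializing $y=1$, $s\in\{0,1\}$. The paper's proof is the one-line remark ``Comparing \eqref{A-EGF} with \eqref{dxyz-EGF}, \eqref{dxys-EGF} and \eqref{Anxy02}, we get the following result,'' and your argument is precisely the spelled-out version of that comparison.
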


Note that $A_{n}(x,1,0)=\sum_{\pi\in\mqn}x^{\basc(\pi)}$.
By~\eqref{AD}, we get
\begin{equation*}\label{ADdecom}
A_{n+1}(x,1,0)=d_n(x)+\sum_{i=1}^{n}\binom{n}{i}A_i(x)d_{n-i}(x).
\end{equation*}
From~\eqref{Pndnx}, we see that $$d_n(x)=\sum_{\substack{\pi\in\mq_{n+1}\\ \pi(1)=1}}x^{\basc(\pi)}.$$
Thus we have
$$\sum_{i=1}^{n}\binom{n}{i}A_i(x)d_{n-i}(x)=\sum_{\substack{\pi\in\mq_{n+1}\\ \pi(1)>1}}x^{\basc(\pi)}.$$
Combining this with~\eqref{Pnx-decom}, we find the following result.
\begin{corollary}\label{corAnx10}
We have $A_n(x,1,0)$ is bi-$\gamma$-positive and
$$x\sum_{i=1}^{n}\binom{n}{i}A_i(x)d_{n-i}(x)=d_{n+1}(x).$$
\end{corollary}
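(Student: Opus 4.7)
The plan is to obtain both parts of the corollary from a single observation: $A_n(x,1,0)$ and $P_n(x)$ are essentially the same polynomial, differing only by a factor of $x$. Indeed, $A_n(x,1,0)=\sum_{\pi\in\mqn}x^{\basc(\pi)}$, and because the absence of successions forces $\basc=\asc$ on $\mqn$, one gets $xA_n(x,1,0)=\sum_{\pi\in\mqn}x^{\asc(\pi)+1}=P_n(x)$. The two claims then emerge by reading two known expressions for $P_{n+1}(x)$ in parallel.

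First I would derive the identity. On one side, \eqref{Pnx-decom} gives $P_{n+1}(x)=d_{n+1}(x)+xd_n(x)$. On the other, the decomposition $A_{n+1}(x,1,0)=d_n(x)+\sum_{i=1}^{n}\binom{n}{i}A_i(x)d_{n-i}(x)$, justified combinatorially in the paragraph preceding the corollary via the $\pi(1)=1$/$\pi(1)>1$ dichotomy, yields
$$P_{n+1}(x)=xA_{n+1}(x,1,0)=xd_n(x)+x\sum_{i=1}^{n}\binom{n}{i}A_i(x)d_{n-i}(x).$$
Equating the two formulas for $P_{n+1}(x)$ and cancelling the common $xd_n(x)$ gives the desired identity $d_{n+1}(x)=x\sum_{i=1}^{n}\binom{n}{i}A_i(x)d_{n-i}(x)$.

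Next I would prove bi-$\gamma$-positivity. From $A_n(x,1,0)=P_n(x)/x=d_n(x)/x+d_{n-1}(x)$, it suffices to show that both $d_n(x)/x$ and $d_{n-1}(x)/x$ are $\gamma$-positive symmetric polynomials. The Shin--Zeng expansion \eqref{dnxq-gamma} specialised at $q=1$ reads $d_m(x)=\sum_{k\geqslant 1}\gamma_{m,k}x^k(1+x)^{m-2k}$, with the sum starting at $k=1$ because $d_m(0)=0$; dividing by $x$ produces
$$\frac{d_m(x)}{x}=\sum_{k\geqslant 0}\gamma_{m,k+1}x^k(1+x)^{(m-2)-2k},$$
a $\gamma$-positive symmetric polynomial of degree $m-2$. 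Writing
$$A_n(x,1,0)=\frac{d_n(x)}{x}+x\cdot\frac{d_{n-1}(x)}{x},$$
and using $A_n(0,1,0)=1\neq 0$, the uniqueness of the symmetric decomposition recorded in \eqref{ax-bx-prop01} identifies this expression as the symmetric decomposition of $A_n(x,1,0)$, and both pieces are $\gamma$-positive.

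The main obstacle I anticipate is the degree-and-symmetry bookkeeping for $d_m(x)/x$: one must check that $d_m(x)$ is symmetric of degree $m$ (from $\exc+\drop=m$ on derangements together with $\pi\mapsto\pi^{-1}$) and has vanishing constant term, so that $d_m(x)/x$ is a genuine polynomial that remains symmetric once the degree drops by two. Beyond that bookkeeping, the argument is a formal consequence of Proposition~\ref{Pnx-bi-gamma} together with the cancellation of $xd_n(x)$ already visible above.
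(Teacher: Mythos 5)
Your proof is correct and mirrors the paper's route precisely: you obtain both claims from the identity $P_n(x)=xA_n(x,1,0)$ together with \eqref{AD}, \eqref{Pndnx}, and \eqref{Pnx-decom}, equating two expressions for $P_{n+1}(x)$ and cancelling $xd_n(x)$. The only difference is cosmetic: you spell out the symmetric decomposition $\bigl(d_n(x)/x,\;d_{n-1}(x)/x\bigr)$ of $A_n(x,1,0)$ and the attendant degree/symmetry bookkeeping, which the paper leaves implicit in the phrase ``combining this with \eqref{Pnx-decom}.''
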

%

Recall that $A_{n}(x,1,1)=\sum_{\pi\in\msn}x^{\basc(\pi)}$.
By~\eqref{AA}, we have
$$A_{n+1}(x,1,1)=2A_n(x)+\sum_{i=1}^{n-1}\binom{n}{i}A_i(x)A_{n-i}(x).$$
Clearly, $\deg \left(A_{n+1}(x,1,1)\right)=n-1$.
It is well known (see~\cite{Gal05}) that the product of two $\gamma$-positive polynomials is still $\gamma$-positive.
Using~\eqref{Anx-gamma}, we see that $A_i(x)A_{n-i}(x)$ is $\gamma$-positive and $\deg \left(A_i(x)A_{n-i}(x)\right)=n-2$ for any $1\leqslant i\leqslant n-1$.
Note that $A_i(x)A_{n-i}(x)$ is not divisible by $x$.
We end this section by giving the following result.
\begin{proposition}\label{Pnxspiral}
For any $n\geqslant 2$, the polynomial $x^{n-2}A_n(1/x,1,1)$ is bi-$\gamma$-positive, and so the big ascent polynomial $A_{n}(x,1,1)$ is spiral.
\end{proposition}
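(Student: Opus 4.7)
The plan is to use the identity
$$A_n(x,1,1) \;=\; 2A_{n-1}(x) \;+\; \sum_{i=1}^{n-2}\binom{n-1}{i}A_i(x)A_{n-1-i}(x),$$
which is~\eqref{AA} written out with $n$ replaced by $n-1$, and to read the symmetric decomposition of the reversal $x^{n-2}A_n(1/x,1,1)$ directly off this formula. Re-indexing shows that it suffices to treat $n\geqslant 3$; the case $n=2$ is trivial since $A_2(x,1,1)=2$.

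The first step is to record the relevant symmetry and $\gamma$-positivity properties. By~\eqref{Anx-gamma}, the Eulerian polynomial $A_{n-1}(x)$ is symmetric of degree $n-2$ and $\gamma$-positive. For $1\leqslant i\leqslant n-2$, each product $A_i(x)A_{n-1-i}(x)$ is a product of two symmetric $\gamma$-positive polynomials, hence symmetric of degree $n-3$ and $\gamma$-positive (using the product fact from~\cite{Gal05} recalled just above the proposition). Consequently,
$$G(x)\;:=\;\sum_{i=1}^{n-2}\binom{n-1}{i}A_i(x)A_{n-1-i}(x)$$
is symmetric of degree $n-3$ and $\gamma$-positive. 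Using the symmetries $x^{n-2}A_{n-1}(1/x)=A_{n-1}(x)$ and $x^{n-3}G(1/x)=G(x)$, the reversal evaluates to
$$x^{n-2}A_n(1/x,1,1)\;=\;2A_{n-1}(x)\;+\;x\,G(x).$$

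The key observation is that this display is already the symmetric decomposition: $2A_{n-1}(x)$ is symmetric of degree $n-2$ and $G(x)$ is symmetric of degree $n-3$, so by the uniqueness recalled in~\eqref{ax-bx-prop01} we must have $a(x)=2A_{n-1}(x)$ and $b(x)=G(x)$. Both are $\gamma$-positive, whence $x^{n-2}A_n(1/x,1,1)$ is bi-$\gamma$-positive. Since bi-$\gamma$-positivity implies the alternatingly increasing property (Br\"and\'en-Solus~\cite{Branden21}), and a polynomial is spiral if and only if its reversal is alternatingly increasing (as recalled in Section~\ref{section02}), it follows that $A_n(x,1,1)$ is spiral. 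I do not foresee a real obstacle: the essential content is simply the recognition that the recurrence~\eqref{AA} is already arranged into the precise form ``(symmetric $\gamma$-positive piece of degree $n-2$) $+$ $x\cdot$(symmetric $\gamma$-positive piece of degree $n-3$)'' needed to certify bi-$\gamma$-positivity of the reversal.
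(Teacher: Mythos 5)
Your proof is correct and follows essentially the same route the paper sketches: the paper cites \eqref{AA}, notes $\deg A_{n+1}(x,1,1)=n-1$, observes each $A_i(x)A_{n-i}(x)$ is $\gamma$-positive of degree $n-2$ and not divisible by $x$, and then states the proposition; you have simply re-indexed and explicitly carried out the identification of $\bigl(2A_{n-1}(x),\,G(x)\bigr)$ with the unique symmetric decomposition, which is exactly the assembly the paper leaves implicit.
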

\section{Triple and quadruple statistics}\label{section03}
\subsection{Main results}
\hspace*{\parindent}

An {\it increasing tree} on $\{0,1,2,\ldots,n\}$ is a rooted tree
with vertex set $\{0,1,2,\ldots,n\}$ in which the labels of the vertices are increasing along any path from the root 0 to a leaf.
The {\it degree} of a vertex is referred to the number of its children.
A {\it 0-1-2 increasing tree} is an increasing tree in which the degree of any vertex is at most two.
\begin{definition}\label{def-forest}
A 0-1-2 increasing planted tree on $\{0,1,\ldots,n\}$ is a rooted tree with the root $0$ satisfying the following two conditions:
\begin{itemize}
  \item [$(i)$] the degree of each child of the root $0$ is at most one;
  \item [$(ii)$] the components of the root $0$ are vertex-disjoint 0-1-2 increasing trees and the union of the labels of these components forms a set partition of $[n]$.
\end{itemize}
\end{definition}
An illustration of a 0-1-2 increasing planted tree on $\{0,1,\ldots,8\}$ is given by Fig.~\ref{fig:simple}, where we assign a weight to each vertex
and there are three components of the root $0$.
\begin{figure}[t]\label{fig:simple}
\centering

\caption{The labeling of a 0-1-2 increasing planted tree on $\{0,1,2,\ldots,8\}$. \label{fig:simple}}
{
    \begin{tikzpicture}[->,>=stealth', level/.style={sibling distance = 5cm/#1, level distance = 2.0cm}, scale=0.6,transform shape]
    \node [treenode] {$0$~$(I)$}
    child
    {
        node [treenode] {$1$~$(1)$}
        child
            {
                node [treenode] {$7$~$(u)$}
            }}
    child
    {
        node [treenode] {$2$~(1)}
        child
        {
            node [treenode] {$3$~$(1)$}
            child
            {
                node [treenode] {$5$~$(u)$}
            }
            child
           {
                node [treenode] {$6$~$(v)$}
            child
            {
                node [treenode] {$8$~$(u)$}
            }}
        }
    }
    child[edge from parent path ={(\tikzparentnode.-30) -- (\tikzchildnode.north)}]
    {
        node [treenode,yshift=0.4cm] (b) {}       
    }
;
\node[align=center,yshift=0.1cm] at (b) {$4$~$(t)$};
\end{tikzpicture}
}
\end{figure}

We can now present the first main result of this paper.
\begin{theorem}\label{mainthm01}
Let $A_n(x,y,s)$ be the trivariate Eulerian polynomials defined by~\eqref{AnxysDef}. Then
\begin{equation}\label{Anxys-recu}
A_{n+1}(x,y,s)=(s+y)A_{n}(x,y,s)+xy\left(\frac{\partial}{\partial x}+\frac{\partial}{\partial y}+\frac{\partial}{\partial s}\right)A_{n}(x,y,s),
\end{equation}
which can be rewritten as
\begin{equation}\label{recu-Anxys02}
\frac{\partial}{\partial z}A=(s+y)A+xy\left(\frac{\partial}{\partial x}+\frac{\partial}{\partial y}+\frac{\partial}{\partial s}\right)A.
\end{equation}
Moreover, one has
\begin{equation}\label{Anxys-gamma}
A_{n+1}(x,y,s)=\sum_{i=0}^n(s+y)^i\sum_{j=0}^{\lrf{(n-i)/2}}\gamma_{n,i,j}(2xy)^j(x+y)^{n-i-2j},
\end{equation}
where the coefficient $\gamma_{n,i,j}$ satisfies the recursion
\begin{equation}\label{gammanij-recu}
\gamma_{n+1,i,j}=\gamma_{n,i-1,j}+(1+i)\gamma_{n,i+1,j-1}+j\gamma_{n,i,j}+(n-i-2j+2)\gamma_{n,i,j-1},
\end{equation}
with the initial conditions $\gamma_{0,0,0}=1$ and $\gamma_{0,i,j}=0$ for $(i,j)\neq (0,0)$. The number $\gamma_{n,i,j}$ equals the
number of 0-1-2 increasing planted trees on $\{0,1,\ldots,n\}$ with
$i+j$ leaves, among which $i$ leaves are children of the root.
\end{theorem}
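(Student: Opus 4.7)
The plan is to establish the recurrence~\eqref{Anxys-recu} combinatorially by inserting the letter $n+1$, pass to the PDE~\eqref{recu-Anxys02}, derive the $\gamma$-expansion~\eqref{Anxys-gamma} and the coefficient recursion~\eqref{gammanij-recu} by substitution, and finally verify the tree interpretation by showing the tree counts satisfy the same recursion with the same initial data.

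For Step 1, I would fix $\pi\in\msn$ and consider the $n+1$ permutations $\pi'\in\ms_{n+1}$ obtained by inserting $n+1$ at each position, tracking the change in $(\basc,\des,\suc)$. The cases are: (a) insertion at the front, which creates a new descent (factor $y$); (b) insertion at the end, creating a new succession (factor $s$) if $\pi(n)=n$ or a new big ascent (factor $x$) otherwise; (c) insertion between $\pi(k-1)$ and $\pi(k)$ with $\pi(k-1)=n$, where the old descent splits into a new succession plus a new descent (factor $s$); and (d) insertion between $\pi(k-1)$ and $\pi(k)$ with $\pi(k-1)\neq n$, where the old position is replaced by a new big ascent plus a new descent, contributing $x$, $y$, or $xy/s$ according as the old position was a descent, big ascent, or succession. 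The key observation is that the unique position of the letter $n$ in $\pi$, whether internal or terminal, always contributes exactly one $s$-factor. Summing the $n+1$ contributions yields the multiplier $(s+y)+\des(\pi)\,x+\basc(\pi)\,y+\suc(\pi)\,xy/s$, and summing over $\pi$ while identifying $\sum_\pi\des(\pi)w(\pi)=y\partial_y A_n$, $\sum_\pi\basc(\pi)w(\pi)=x\partial_x A_n$, and $\sum_\pi\suc(\pi)w(\pi)/s=\partial_s A_n$ gives~\eqref{Anxys-recu}. Multiplying by $z^n/n!$ and summing yields~\eqref{recu-Anxys02}.

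For Step 2, I would set $B_n:=\sum_{i,j}\gamma_{n,i,j}(s+y)^i(2xy)^j(x+y)^{n-i-2j}$ with $\gamma_{0,0,0}=1$ and verify $B_n=A_{n+1}$ by induction. Substituting $A_n=B_{n-1}$ into~\eqref{Anxys-recu} and applying $\partial_s(s+y)^i=\partial_y(s+y)^i=i(s+y)^{i-1}$, $\partial_x(2xy)^j=2jy(2xy)^{j-1}$, $\partial_y(2xy)^j=2jx(2xy)^{j-1}$, and $\partial_x(x+y)^k=\partial_y(x+y)^k=k(x+y)^{k-1}$, together with the crucial reduction $xy\cdot(x+y)^k=\tfrac{1}{2}(2xy)(x+y)^k$, keeps every term in the basis $(s+y)^i(2xy)^j(x+y)^{n-i-2j}$; reading off the coefficient of each basis element reproduces~\eqref{gammanij-recu}.

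For Step 3, let $T_{n,i,j}$ denote the number of planted trees of the prescribed type; I would show $T$ satisfies~\eqref{gammanij-recu} with $T_{0,0,0}=1$. Given $\tau'$ on $\{0,\ldots,n+1\}$ of type $(i,j)$, remove the leaf $n+1$ and classify by its parent $v$ in $\tau'$: (I) $v=0$ contributes $T_{n,i-1,j}$; (II) $v$ is a (necessarily degree-$1$) child of the root, hence a leaf in $\tau$, contributing $(i+1)T_{n,i+1,j-1}$; (III) $v$ is an other-vertex of degree $1$ in $\tau'$, a leaf in $\tau$, contributing $jT_{n,i,j}$; (IV) $v$ is an other-vertex of degree $2$ in $\tau'$, degree $1$ in $\tau$. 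For (IV), counting edges in a tree of type $(i,j-1)$ with $\ell$ children of the root as $\ell+(\ell-i)+\beta+2\delta=n$ together with the vertex total $j-1+\beta+\delta=n-\ell$ eliminates $\ell$ and $\delta$ to give $\beta=n+2-i-2j$ degree-$1$ other-vertices, and hence the contribution $(n+2-i-2j)T_{n,i,j-1}$. I expect the main obstacle to be the bookkeeping in Step 1, in particular the verification that the letter $n$ always contributes exactly one $s$-factor, which is what produces the constant term $(s+y)$ rather than just $y$; the edge count in Step 3(IV) is a secondary subtlety that hinges on the planted condition that each child of the root has degree at most $1$.
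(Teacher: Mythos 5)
Your proposal is correct, and the underlying combinatorics matches the paper's proof: the paper establishes the insertion recurrence via a grammatical labeling and the grammar $G=\{L\to Ly,\,M\to Ms,\,s\to xy,\,x\to xy,\,y\to xy\}$ (Lemma~\ref{lemmaLM}), obtains the $\gamma$-expansion via a change of grammars to $G'=\{I\to It,\,t\to u,\,u\to uv,\,v\to u\}$ with $u=2xy,\,v=x+y,\,t=s+y$, and then reads off the tree interpretation from the grammatical labeling of $0$-$1$-$2$ increasing planted trees. You carry out the same three steps -- insertion of $n+1$ slot by slot, substitution of the ansatz $\sum\gamma_{n,i,j}(s+y)^i(2xy)^j(x+y)^{n-i-2j}$ into the PDE, and a direct removal-of-$n+1$ count on trees including the edge/vertex bookkeeping that yields $\beta=n+2-i-2j$ -- just phrased without invoking the context-free grammar machinery. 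I verified your slot-by-slot analysis (in both cases $\pi(n)=n$ and $\pi(n)\neq n$ the total multiplier is $(s+y)+\des(\pi)x+\basc(\pi)y+\suc(\pi)xy/s$), your partial-derivative computation (the pair $2jy$ and $2jx$ from $\partial_x,\partial_y$ acting on $(2xy)^j$ recombines to $2j(x+y)$, and the outer $xy=\tfrac12(2xy)$ keeps everything in the basis), and your four-way parent classification in Step~3; all give exactly the recursion~\eqref{gammanij-recu}. The argument is sound and complete.
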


Combining~\eqref{dxz-EGF} and~\eqref{A-EGF}, we see that
$$A_{n+1}(x,1,-1)=\sum_{i=0}^n\binom{n}{i}{d}_i(x)d_{n-i}(x).$$
\begin{corollary}
We have
\begin{equation*}
A_{n+1}(x,y,-y)=\sum_{\pi\in\ms_{n+1}}x^{\basc(\pi)}y^{\des(\pi)}(-y)^{\suc(\pi)}=\sum_{j=0}^{\lrf{n/2}}\gamma_{n,0,j}(2xy)^j(x+y)^{n-2j},
\end{equation*}
and so the binomial convolution of the derangement polynomials are $\gamma$-positive, i.e.,
\begin{equation*}
\sum_{i=0}^n\binom{n}{i}{d}_i(x)d_{n-i}(x)=A_{n+1}(x,1,-1)=\sum_{j=0}^{\lrf{n/2}}\gamma_{n,0,j}(2x)^j(1+x)^{n-2j}.
\end{equation*}
\end{corollary}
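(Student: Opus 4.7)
The plan is to derive the corollary as a direct specialization of Theorem~\ref{mainthm01} together with the exponential generating function~\eqref{A-EGF}. The key observation is that setting $s=-y$ kills every term in the $\gamma$-expansion~\eqref{Anxys-gamma} except the $i=0$ slice, which already has the desired $\gamma$-positive form.

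First I would substitute $s=-y$ into~\eqref{Anxys-gamma}. Since $(s+y)^i=0$ whenever $i\geqslant 1$ and equals $1$ when $i=0$, the outer sum collapses, yielding
\begin{equation*}
A_{n+1}(x,y,-y)=\sum_{j=0}^{\lrf{n/2}}\gamma_{n,0,j}(2xy)^{j}(x+y)^{n-2j},
\end{equation*}
which is the first displayed identity. On the other hand, by the definition of $A_{n+1}(x,y,s)$ in~\eqref{AnxysDef}, the left-hand side is exactly $\sum_{\pi\in\ms_{n+1}}x^{\basc(\pi)}y^{\des(\pi)}(-y)^{\suc(\pi)}$.

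Next I would obtain the binomial convolution identity for derangement polynomials by specializing the generating function~\eqref{A-EGF} at $y=1$, $s=-1$. Under this substitution the prefactor $\mathrm{e}^{z(y+s)}$ becomes $\mathrm{e}^0=1$, so
\begin{equation*}
\sum_{n=0}^{\infty}A_{n+1}(x,1,-1)\frac{z^{n}}{n!}=\left(\frac{1-x}{\mathrm{e}^{xz}-x\mathrm{e}^{z}}\right)^{2}=d(x;z)^{2},
\end{equation*}
where the last equality uses~\eqref{dxz-EGF}. Extracting coefficients via the product rule for exponential generating functions then gives
\begin{equation*}
A_{n+1}(x,1,-1)=\sum_{i=0}^{n}\binom{n}{i}d_{i}(x)d_{n-i}(x).
\end{equation*}
Finally, setting $y=1$ in the first identity yields the $\gamma$-expansion $A_{n+1}(x,1,-1)=\sum_{j}\gamma_{n,0,j}(2x)^{j}(1+x)^{n-2j}$, and combining the two expressions proves the claimed $\gamma$-positivity.

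There is no serious obstacle here; the whole argument is a specialization. The only point that warrants care is checking that $\mathrm{e}^{z(y+s)}|_{y=1,s=-1}=1$ so that~\eqref{A-EGF} collapses cleanly to the square of~\eqref{dxz-EGF}, and that the coefficient-by-coefficient comparison with the $y=1$ specialization of the $\gamma$-expansion is consistent (which is guaranteed because both quantities equal $A_{n+1}(x,1,-1)$).
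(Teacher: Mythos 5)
Your proof is correct and follows essentially the same route as the paper: the first identity is the $s=-y$ specialization of the $\gamma$-expansion~\eqref{Anxys-gamma} (where $(s+y)^i$ kills all $i\geqslant 1$ terms), and the binomial-convolution identity comes from setting $y=1$, $s=-1$ in~\eqref{A-EGF} and recognizing $d(x;z)^2$ via~\eqref{dxz-EGF}, exactly as the paper indicates in the sentence immediately preceding the corollary.
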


Given any $\pi\in\msn$, we define
\begin{align*}
&\rm{bAsc}(\pi)=\{\pi(i+1): \pi(i+1)\geqslant \pi(i)+2,~i\in [n-1]\},\\
&\rm{Des}(\pi)=\{\pi(i+1): \pi(i)>\pi(i+1),~i\in[n-1]\},\\
&\rm{Suc}(\pi)=\{\pi(i+1): \pi(i+1)=\pi(i)+1,~i\in[n-1]\},\\
&\Aexc(\pi)=\{\pi(i): \pi(i)<i,~i\in\{2,3,\ldots,n\}\},\\
&\widehat{\Exc}(\pi)=\{\pi(i): \pi(i)>i,~i\in\{2,3,\ldots,n\}\},\\
&\widehat{\Fix}(\pi)=\{\pi(i): \pi(i)=i,~i\in\{2,3,\ldots,n\}\}.
\end{align*}
Set $\aexc(\pi)=\#\Aexc(\pi),~\widehat{\exc}(\pi)=\#\widehat{\Exc}(\pi)$ and $\widehat{\fix}(\pi)=\#\widehat{\Fix}(\pi)$.
\begin{theorem}\label{NewEu}
The following two triple set-valued statistics are equidistributed over $\msn$:
$$\left(\rm{bASC},~\rm{Des},~\rm{Suc}\right),~\left(\widehat{\Exc},~\Aexc,~\widehat{\Fix}\right).$$
So we have
$$\sum_{\pi\in\msn}x^{\basc(\pi)}y^{\des(\pi)}s^{\suc(\pi)}=\sum_{\pi\in\msn}x^{\widehat{\exc}(\pi)}y^{\aexc(\pi)}s^{\widehat{\fix}(\pi)}.$$
Since $\widehat{\exc}+\widehat{\fix}$ is equidistributed with $\asc$ over $\msn$, it is an Eulerian statistic.
\end{theorem}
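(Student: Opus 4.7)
My plan is to give a bijective proof of Theorem~\ref{NewEu}, constructing an explicit map $\Phi:\msn\to\msn$ that sends the LHS triple to the RHS triple and thereby extends the bijective argument of Diaconis--Evans--Graham~\eqref{Graham} from pairs to triples of set-valued statistics. A preliminary observation pins down the first entry of $\Phi(\pi)$: both triples partition $[n]\setminus\{\pi(1)\}$, since on the LHS the value $\pi(k+1)$ (for $k\in[n-1]$) is classified by the sign of $\pi(k+1)-\pi(k)-1$, while on the RHS the value $\pi(i)$ (for $i\in\{2,\dots,n\}$) is classified by the sign of $\pi(i)-i$, with $\pi(1)$ the unique unclassified value in both cases. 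Consequently any realising bijection must satisfy $\Phi(\pi)(1)=\pi(1)$.

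The construction of $\Phi$ refines Foata's first fundamental transformation through the succession structure. Given $\pi$, decompose its one-line notation into maximal \emph{succession runs} $R_1,\dots,R_r$, where each $R_j=(h_j,h_j+1,\dots,h_j+|R_j|-1)$ is a maximal block of positions on which $\pi$ increases by exactly~$1$. The interior entries of the $R_j$ are precisely the succession values of $\pi$, and between consecutive heads $h_{j-1}$ and $h_j$ there is either a descent ($h_j\le h_{j-1}+|R_{j-1}|-2$) or a big ascent ($h_j\ge h_{j-1}+|R_{j-1}|+1$). In the image $\hat\pi=\Phi(\pi)$ every succession value $v$ will be placed at position $v$ as a fixed point, which immediately gives $\widehat{\Fix}(\hat\pi)=\rm{Suc}(\pi)$. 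The remaining positions $P=\{1\}\cup(\{2,\dots,n\}\setminus\rm{Suc}(\pi))$ then carry the heads, with $h_1=\pi(1)$ pinned to position~$1$ and $h_2,\dots,h_r$ assigned to $P\setminus\{1\}$ via a variant of Foata's first fundamental transformation applied to the reduced head word, arranged so that descent heads land at positions strictly above themselves (becoming drop values) and big-ascent heads land at positions strictly below themselves (becoming excedance values).

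Verification then proceeds by case analysis at each position $k\ge 2$ of $\hat\pi$: if $k\in\rm{Suc}(\pi)$ then $\hat\pi(k)=k$ matches the succession value $k\in\widehat{\Fix}(\hat\pi)$, and if $k=p_j$ for some head $h_j$ the construction delivers $\hat\pi(k)=h_j$ with the correct excedance or drop type. The concluding corollary that $\widehat{\exc}+\widehat{\fix}$ is equidistributed with $\asc$ will then follow from the proven set-valued identity by specialising $y=1$ and $x=s$, using $\basc+\suc=\asc$. The principal obstacle is the refinement from enumerative to set-valued equality: small checks in $\ms_4$ already show that naive greedy or standardised-Foata assignments can collapse two distinct preimages onto the same $\hat\pi$, so the position $p_j$ must encode, through the sign of $p_j-h_j$, the precise type of the step producing $h_j$ while simultaneously remembering enough of the head ordering to recover $\pi$. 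Making this choice canonical and globally bijective on $\msn$, not merely correct on cardinalities, is the technical heart of the argument.
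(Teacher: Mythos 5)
There is a genuine gap here, and you acknowledge it yourself: the proposal ends by stating that making the Foata-type assignment of heads to positions ``canonical and globally bijective on $\msn$, not merely correct on cardinalities, is the technical heart of the argument,'' which is exactly the step you have not supplied. Your preliminary observations are sound --- any realising bijection $\Phi$ must fix $\pi(1)$, and must send each succession value $v$ to a fixed point $\widehat\pi(v)=v$, since both triples partition $[n]\setminus\{\pi(1)\}$ and $\rm{Suc}(\pi)=\widehat{\Fix}(\widehat\pi)$ is one of the three required equalities. The classification of each inter-run step as a descent or a big ascent is also correct. But once the fixed points are placed, the residual problem is to distribute the heads $h_2,\dots,h_r$ onto the free positions so that descent heads become drops and big-ascent heads become excedances \emph{simultaneously and reversibly}; you report that naive standardised-Foata choices already collide in $\ms_4$, and no alternative rule is given. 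So what remains is precisely the content of the theorem, not a routine verification.

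The paper takes a different route that bypasses the need for a single-shot explicit map: it places both families of permutations under the same context-free grammar $G=\{L\rightarrow Ly,\ M\rightarrow Ms,\ s\rightarrow xy,\ x\rightarrow xy,\ y\rightarrow xy\}$, with one grammatical labeling for one-line notation tracking $(\basc,\des,\suc)$ by value, and a second grammatical labeling for standard cycle form tracking $(\widehat{\exc\,},\aexc,\widehat{\fix\,})$ by value. Inserting the letter $n$ (resp.\ $n+1$) realises each substitution rule in both models, so matching insertion histories yields a recursive bijection that is set-valued automatically, because each insertion moves or creates the same label in both pictures. That mechanism is what supplies the canonical, reversible choice you were missing. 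If you want to pursue an explicit non-recursive $\Phi$, you should at least verify that your candidate agrees with the recursive grammar bijection on small $n$ and then prove compatibility with the insertion step; as written the proposal does not constitute a proof.
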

\subsection{Proof of Theorem~\ref{mainthm01}}
\hspace*{\parindent}

For an alphabet $A$, let $\mathbb{Q}[[A]]$ be the rational commutative ring of formal power
series in monomials formed from letters in $A$. Following Chen~\cite{Chen93}, a {\it context-free grammar} over
$A$ is a function $G: A\rightarrow \mathbb{Q}[[A]]$ that replaces each letter in $A$ by a formal function over $A$.
The formal derivative $D_G$ with respect to $G$ satisfies the derivation rule:
$$D_G(u+v)=D_G(u)+D_G(v),~D_G(uv)=D_G(u)v+uD_G(v).$$
In the theory of context-free grammars, there are two widely used method.
Following~\cite{Chen17,Dumont96}, the {\it grammatical labeling method} is an assignment of the underlying elements of
a combinatorial structure with variables, which is consistent with the substitution rules
of a grammar.
Another well known method is the {\it change of grammars}, which essentially is a change of variables, see~\cite{Chen22,Chen23,Lin21,Ma19,Ma2401,Ma23} for applications.

The following result is fundamental.
\begin{lemma}\label{lemmaLM}
If
\begin{equation}\label{LMxys-G}
G=\{L\rightarrow Ly,M\rightarrow Ms,s\rightarrow xy,x\rightarrow xy,y\rightarrow xy\},
\end{equation}
then we have
\begin{equation}\label{derangment-grammarLMA}
D_{G}^n(LM)=LMA_{n+1}(x,y,s)=LM\sum_{\pi\in\ms_{n+1}}x^{\basc(\pi)}y^{\des(\pi)}s^{\suc(\pi)}.
\end{equation}
\end{lemma}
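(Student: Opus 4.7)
The plan is to prove Lemma~\ref{lemmaLM} by induction on $n$, combining the grammatical labeling method with an insertion analysis that bijects $\ms_{n+1}\times[n+2]$ with $\ms_{n+2}$. The base case $n=0$ is immediate, since $D_G^0(LM)=LM=LM\cdot A_1(x,y,s)$.

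For the inductive step, I would first attach to each $\pi\in\ms_{n+1}$ the monomial $LM\cdot w(\pi)$, where $w(\pi)=x^{\basc(\pi)}y^{\des(\pi)}s^{\suc(\pi)}$. Combinatorially, this amounts to labeling the $n+2$ ``slots'' of $\pi$: the slot before $\pi(1)$ by $L$, the slot after $\pi(n+1)$ by $M$, and the slot between $\pi(i)$ and $\pi(i+1)$ by $x$, $y$, or $s$ according to whether the pair $(\pi(i),\pi(i+1))$ is a big ascent, a descent, or a succession. A direct application of the Leibniz and chain rules to $D_G$ then gives
\begin{equation*}
D_G\bigl(LM\cdot w(\pi)\bigr)=LM\Bigl[(y+s)\,w(\pi)+xy\bigl(\partial_x+\partial_y+\partial_s\bigr)w(\pi)\Bigr].
\end{equation*}

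The heart of the argument is to match this expression, for each fixed $\pi$, with the sum $\sum_{\sigma}w(\sigma)$ taken over the $n+2$ permutations $\sigma\in\ms_{n+2}$ produced from $\pi$ by inserting $n+2$ into one of its $n+2$ slots. In the ``generic'' situation the grammar predictions are exactly right: inserting $n+2$ at the $L$-slot always produces a descent and multiplies the weight by $y$; inserting $n+2$ at an internal slot of type $u\in\{x,y,s\}$ replaces $u$ by $xy$ (a big ascent $\pi(j-1)\to n+2$ followed by a descent $n+2\to\pi(j)$), yielding the factor $xy/u$ predicted by $u\to xy$; and inserting $n+2$ at the $M$-slot when $\pi(n+1)=n+1$ produces a succession, contributing $s\cdot w(\pi)$ as predicted by $M\to Ms$.

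The main obstacle is a pair of ``exceptional'' configurations that arise precisely when $\pi(n+1)\neq n+1$, i.e.\ when $n+1$ sits at some interior position $k\in[n]$ of $\pi$. In that event, (i) inserting $n+2$ at the $M$-slot yields a big ascent $\pi(n+1)\to n+2$ instead of a succession, contributing $x\cdot w(\pi)$ rather than the predicted $s\cdot w(\pi)$; and (ii) inserting $n+2$ at slot $k$---necessarily of type $y$, since $\pi(k)=n+1>\pi(k+1)$ forces a descent there---produces the succession $n+1\to n+2$ followed by the descent $n+2\to\pi(k+1)$, contributing $s\cdot w(\pi)$ rather than the predicted $x\cdot w(\pi)$. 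The two discrepancies $(x-s)\,w(\pi)$ and $(s-x)\,w(\pi)$ cancel exactly, so the insertion sum agrees with the grammar side for every $\pi\in\ms_{n+1}$. Summing over $\pi$ and invoking the inductive hypothesis $D_G^n(LM)=LM\cdot A_{n+1}$ then yields $D_G^{n+1}(LM)=LM\cdot A_{n+2}$, closing the induction.
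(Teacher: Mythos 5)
Your proof is correct, and it follows the same broad strategy as the paper (grammatical labeling plus an insertion/induction argument), but with a genuinely different labeling convention, which changes the character of the combinatorial verification. The paper places the label $M$ immediately \emph{after the maximum entry} (not at the end of the word) and compensates by assigning the rightmost slot a $y$ whenever the maximum is not last; with that convention, inserting the new largest entry into any labeled slot corresponds \emph{exactly} to one rewriting rule, so the correspondence between insertions and terms of $D_G$ is a clean bijection and no cancellation is needed. You instead place $M$ at the very end, which makes the weight assignment more uniform, but breaks the one-to-one matching precisely when the maximum of $\pi$ is interior: the $M$-slot predicts $M\to Ms$ but actually yields a big ascent, while the $y$-slot right after the interior maximum predicts $y\to xy$ but actually yields a succession. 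You correctly observe that these two defects contribute $(x-s)w(\pi)$ and $(s-x)w(\pi)$ and cancel, so the total over all $n+2$ insertions still equals $D_G(LM\,w(\pi))$; this is the right fix and closes the induction. The trade-off is that the paper's after-the-max convention buys a term-for-term bijection at the cost of a slightly less natural weight rule, while yours keeps the weight rule transparent at the cost of a two-term cancellation argument. Both are valid, and your identification of the exceptional pair and their cancellation is exactly the point that needs checking under your convention.
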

\begin{proof}
We first introduce a grammatical labeling of $\pi=\pi(1)\pi(2)\cdots\pi(n)\in \msn$:
\begin{itemize}
\item [\rm ($i$)]Put a superscript label $L$ at the front of $\pi$;
\item [\rm ($ii$)]Put a superscript label $M$ right after the maximum entry $n$;
  \item [\rm ($iii$)]If $i$ is a big ascent, then put a superscript label $x$ right after $\pi(i)$;
 \item [\rm ($iv$)]If $i$ is a descent and $\pi(i)\neq n$, then put a superscript label $y$ right after $\pi(i)$;
  \item [\rm ($v$)]If $\pi(n)\neq n$, then put a superscript label $y$ at the end of $\pi$;
\item [\rm ($vi$)]If $i$ is a succession, then put a superscript label $s$ right after $\pi(i)$.
\end{itemize}
The weight of $\pi$ is defined to be the product of its labels.
Thus the weight of $\pi$ is given by $$w(\pi)=LMx^{\basc(\pi)}y^{\des(\pi)}s^{\suc(\pi)}.$$
Note that $\ms_1=\{^L1^M\}$ and $\ms_2=\{^L1^s2^M, ^L2^M1^y\}$. Note that $D_{G}(LM)=LM(s+y)$.
The weight of the element in $\ms_1$ is $LM$ and the sum of weights of the elements in $\ms_2$ is given by $D_{G}(LM)$.
Suppose we get all labeled permutations in $\ms_{n-1}$, where $n\geqslant 2$. Let
$\widehat{{\pi}}$ be a permutation obtained from $\pi\in\ms_{n-1}$ by inserting $n$.
There are six cases to label $n$ and relabel some elements of $\pi$. Setting $\pi_i=\pi(i)$,
then the changes of labeling can be illustrated as follows:
$$ ^L\pi_1\cdots (n-1)^M\cdots   \mapsto  ^Ln^M\pi_1\cdots (n-1)^y\cdots ;$$
$$ ^L\pi_1\cdots (n-1)^M\cdots   \mapsto ^L\pi_1\cdots (n-1)^sn^M\cdots;$$
$$ \cdots\pi_i^x\cdots (n-1)^M\cdots   \mapsto \cdots\pi_i^xn^M\cdots (n-1)^y\cdots ;$$
$$ \cdots\pi_i^y\pi_{i+1}\cdots(n-1)^M\cdots   \mapsto \cdots\pi_i^xn^M\pi_{i+1}\cdots(n-1)^y\cdots;$$
$$ \cdots (n-1)^M\cdots\pi_{n-1}^y   \mapsto  \cdots (n-1)^y\cdots\pi_{n-1}^xn^M;$$
$$ \cdots \pi_i^s\pi_{i+1}\cdots(n-1)^M\cdots   \mapsto \cdots \pi_i^xn^M\pi_{i+1}\cdots (n-1)^y\cdots.$$
In each case, the insertion of $n$ corresponds to one substitution rule in $G$. By induction,
it is routine to check that the action of the formal derivative $D_{G}$ on the set of weighted permutations in $\ms_{n-1}$ gives the set of weighted permutations in
$\ms_{n}$. This completes the proof of~\eqref{derangment-grammarLMA}.
\end{proof}
\noindent{\bf A proof
Theorem~\ref{mainthm01}:}
\begin{proof}
\quad (A) Let $G$ be the grammar defined by~\eqref{LMxys-G}.
By induction, we see that there exist nonnegative integers $a_{n,i,j}$ such that
$$D_{G}^n(LM)=LM\sum_{i,j=0}^na_{n,i,j}x^iy^js^{n-i-j}.$$
Then we get
\begin{align*}
&D_{G}\left(D_{G}^n(LM)\right)\\
&=LM\sum_{i,j=0}^na_{n,i,j}\left(x^iy^{j+1}s^{n-i-j}+x^iy^js^{n+1-i-j}\right)+\\
&LM\sum_{i,j=0}^na_{n,i,j}\left(ix^iy^{j+1}s^{n-i-j}+jx^{i+1}y^{j}s^{n-i-j}+(n-i-j)x^{i+1}y^{j+1}s^{n-1-i-j}\right).
\end{align*}
Comparing the coefficients of $LMx^iy^js^{n+1-i-j}$ in both sides of the above expression, we get
\begin{equation}\label{anij-recu}
a_{n+1,i,j}=a_{n,i,j}+(1+i)a_{n,i,j-1}+ja_{n,i-1,j}+(n-i-j+2)a_{n,i-1,j-1}.
\end{equation}
Multiplying both sides of~\eqref{anij-recu} by $x^iy^js^{n+1-i-j}$ and summing over all $i,j$,
we arrive at~\eqref{Anxys-recu}.

\quad (B) We now make a change of variables. Setting
$u=2xy,v=x+y,t=s+y$ and $I=LM$,
we get
$D_{G}(u)=uv,D_{G}(v)=u,D_{G}(t)=u$ and $D_{G}(I)=It$. Thus we get a new grammar
\begin{equation}\label{JAcobi-gram02}
G'=\{I\rightarrow It,t\rightarrow u, u\rightarrow uv,v\rightarrow u\}.
\end{equation}
Note that $D_{G'}(I)=It,~D_{G'}^2(I)=I(t^2+u)$ and $D_{G'}^3(I)=I(t^3+3tu+uv)$.
Then by induction, it is routine to check that there exist nonnegative integers $\gamma_{n,i,j}$ such that
\begin{equation}\label{DG101}
D_{G'}^n(I)=I\sum_{i=0}^nt^i\sum_{j=0}^{\lrf{(n-i)/2}}\gamma_{n,i,j}u^jv^{n-i-2j}.
\end{equation}
Then upon taking $u=2xy,v=x+y,t=s+y$ and $I=LM$, we get~\eqref{Anxys-gamma}.
In particular, $\gamma_{0,0,0}=1$ and $\gamma_{0,i,j}=0$ if $(i,j)\neq (0,0)$.
Since $D_{G'}^{n+1}(I)=D_{G'}\left(D_{G'}^n(I)\right)$,
we obtain
\begin{align*}
D_{G'}\left(D_{G'}^n(I)\right)&
=I\sum_{i,j}\gamma_{n,i,j}\left(t^{i+1}u^jv^{n-i-2j}+it^{i-1}u^{j+1}v^{n-i-2j}\right)+\\
&I\sum_{i,j}\gamma_{n,i,j}\left(jt^iu^jv^{n+1-i-2j}+(n-i-2j)t^iu^{j+1}v^{n-1-i-2j}\right).
\end{align*}
Comparing the coefficients of $t^iu^jv^{n+1-i-2j}$ in both sides of the above expansion, we get~\eqref{gammanij-recu}.

\quad (C) The combinatorial interpretation of $\gamma_{n,i,j}$ can be found by using the following grammatical labeling.
Given a 0-1-2 increasing planted tree $T$, the root 0 is labeled by $I$. For the children of the root,
each child with degree 0 (a leaf of the root) is labeled by $t$ and each child with degree one is labeled by 1.
For the other vertices (not the children of the root), each leaf is labeled by $u$, each vertex with degree one is labeled by $v$ and each vertex of degree two is labeled by 1.
See Fig.~\ref{fig:simple} for an example, where
the grammatical labels are given in parentheses.

Let $T$ be the 0-1-2 increasing planted tree given in Fig.~\ref{fig:simple}.
We distinguish four cases:
\begin{itemize}
\item [\rm ($i$)]If we add 9 as a child of the root 0, then the vertex 9 becomes a leaf of the root, and the label of 9 is $t$.
This corresponds to the substitution rule $I\rightarrow It$;
\item [\rm ($ii$)]If we add 9 as a child of the vertex 4, the label of 4 becomes 1, and the vertex 9 gets the label $u$.
This corresponds to the substitution rule $t\rightarrow u$;
  \item [\rm ($iii$)]If we add 9 as a child of the vertex 5 (resp.~7, 8), the label $u$ of 5 (resp.~7, 8) becomes $v$, and the vertex 9 gets the label $u$.
This corresponds to the substitution rule $u\rightarrow uv$;
 \item [\rm ($iv$)]If we add 9 as a child of the vertex 6, the label $v$ of 6 becomes 1, and the vertex 9 gets the label $u$.
This corresponds to the substitution rule $v\rightarrow u$.
\end{itemize}

The aforementioned four cases exhaust all the cases to construct a 0-1-2 increasing planted tree $T'$ on $\{0,1,2,\ldots,n,n+1\}$
from a 0-1-2 increasing planted tree $T$ on
$\{0,1,2,\ldots,n\}$ by adding $n+1$ as a leaf.
Since $D_{G'}^n(I)$ equals the sum of the weights
of 0-1-2 increasing planted trees on $\{0,1,2,\ldots,n\}$, then $\gamma_{n,i,j}$
counts 0-1-2 increasing planted tree $T$ on $\{0,1,2,\ldots,n\}$ with $i+j$ leaves, among which $i$ leaves are the children of the root. This completes the proof.
\end{proof}
\subsection{Proof of Theorem~\ref{NewEu}}
\hspace*{\parindent}

We now write any permutation in $\msn$ by using its standard cycle form, where each cycle
is written with its smallest entry first and the cycles are written in increasing order of
their smallest entry. Another grammatical labeling of $\pi\in \msn$ is given as follows:
\begin{itemize}
\item [\rm ($i$)]Put a superscript label $L$ right after the entry $1$;
\item [\rm ($ii$)]Put a superscript label $M$ at the end of $\pi$;
  \item [\rm ($iii$)]If $\pi(i)\in \Aexc(\pi)$, then put a superscript label $y$ right after $i$;
 \item [\rm ($iv$)]If $\pi(i)\in \widehat{\Exc}(\pi)$, then put a superscript label $x$ right after $i$;
  \item [\rm ($v$)]If $\pi(i)\in \widehat{\Fix}(\pi)$, then put a superscript label $s$ right after $i$.
\end{itemize}
Thus the weight of $\pi$ is given by $$w(\pi)=LMx^{\widehat{\exc}(\pi)}y^{\aexc(\pi)}s^{\widehat{\fix}(\pi)}.$$
In particular, $\ms_1=\{(1^L)^M\},~\ms_2=\{(1^L)(2^s)^M,~(1^L,2^y)^M\}$, and the elements in $\ms_3$ are listed as follows:
$$(1^L)(2^s)(3^s)^M,~(1^L)(2^x,3^y)^M,~(1^L,3^y)(2^s)^M,~(1^L,2^y)(3^s)^M,~(1^L,3^y,2^y)^M,~(1^L,2^x,3^y)^M.$$
Along the same lines as in the proof of Lemma~\ref{lemmaLM}, one can easily deduce that
\begin{equation}\label{derangment-grammarLMA02}
D_{G}^n(LM)=LM\sum_{\pi\in\ms_{n+1}}x^{\widehat{\exc}(\pi)}y^{\aexc(\pi)}s^{\widehat{\fix}(\pi)}.
\end{equation}
As illustrated by Example~\ref{exampleLM},
by analyzing the changes of labeling, it is routine to check that
$$\left(\rm{bASC},~\rm{Des},~\rm{Suc}\right),~\left(\widehat{\Exc},~\Aexc,~\widehat{\Fix}\right)$$
are equidistributed over $\msn$ and we omit the details for simplicity.

\begin{example}\label{exampleLM}
When $n=3$, the correspondences of $\left(\rm{bASC},~\rm{Des},~\rm{Suc}\right)$ and $\left(\widehat{\Exc},~\Aexc,~\widehat{\Fix}\right)$ can be listed as follows:
\begin{align*}
&^L1^s2^s3^M\leftrightarrow (1^L)(2^s)(3^s)^M;~^L1^x3^M2^y\leftrightarrow (1^L)(2^x3^y)^M;~^L3^M1^s2^y\leftrightarrow (1^L3^y)(2^s)^M;\\
&^L2^y1^x3^M\leftrightarrow (1^L2^x3^y)^M;~^L2^s3^M1^y\leftrightarrow (1^L2^y)(3^s)^M;~^L3^M2^y1^y\leftrightarrow (1^L3^y2^y)^M.
\end{align*}
\end{example}

\subsection{Another interpretation of the coefficients $\gamma_{n,i,j}$}
\hspace*{\parindent}

Simsun permutations were introduced by Simion and Sundaram when they studied the action of the symmetric group on the maximal chains of the
partition lattice~\cite[p.~267]{Sundaram1994}.
We say that $\pi\in\msn$ has no {\it proper double descents} if there is no
index $i\in [n-2]$ such that $\pi(i)>\pi(i+1)>\pi(i+2)$.
Then $\pi$ is called {\it simsun} if for all $k$, the
subword of $\pi$ restricted to $[k]$ (in the order
they appear in $\pi$) contains no proper double descents.
Let $\rs_n$ be the set of simsun permutations of length $n$.
Define $$S_n(x)=\sum_{\pi\in\rs_n}x^{\des(\pi)}.$$
Here we list another three combinatorial interpretations of $S_n(x)$:
\begin{itemize}
  \item the polynomial $S_n(x)$ is also the descent polynomial of And\'re permutations of the second kind of order $n+1$, see~\cite{Chow11,Foata01};
  \item the polynomial $xS_n(x)$ equals the Andr\'e polynomial that counts increasing 0-1-2 trees on $[n+1]$ by their leaves, see~\cite{Chen17,Foata01};
  \item the polynomial $S_n(x)$ counts simsun permutations of the second kind of order $n$ by their numbers of excedances, see~\cite{Ma17}.
\end{itemize}

A value $x=\pi(i)$ is called a {\it cycle double ascent} of $\pi$ if
$i=\pi^{-1}(x)<x<\pi(x)$.
We say that $\pi\in\msn$ is a {\it simsun permutation of the second kind} if for all $k\in [n]$,
after removing the $k$ largest letters of $\pi$, the resulting permutation has no cycle double ascents.
For example, $(1,6,5,3,4)(2)$ is not a simsun permutation of the second kind since when we remove the letters 5 and 6,
the resulting permutation $(1,3,4)(2)$ contains the cycle double ascent 3.
Let $\rss_n$ be the set of the simsun permutations of the second kind of length $n$.
We can now present another interpretation of the coefficients $\gamma_{n,i,j}$ defined by~\eqref{Anxys-gamma}.
\begin{proposition}\label{combinatorial-inter}
For any $0\leqslant i\leqslant n$ and $0\leqslant j\leqslant \lrf{(n-i)/2}$,
the number $\gamma_{n,i,j}$ counts simsun permutations of the second kind of order $n$ which have exactly $i$ fixed points and $j$ excedances.
\end{proposition}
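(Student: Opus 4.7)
The plan is to establish a grammatical labeling of $\rss_n$ that is compatible with the grammar $G' = \{I \to It,\, t \to u,\, u \to uv,\, v \to u\}$ from part (B) of the proof of Theorem~\ref{mainthm01}, so that
\begin{equation*}
D_{G'}^n(I) = I \sum_{\pi \in \rss_n} t^{\fix(\pi)} u^{\exc(\pi)} v^{\cdd(\pi)},
\end{equation*}
where $\cdd(\pi)$ denotes the number of \emph{cycle double descents} of $\pi$, i.e., positions $i$ with $\pi^{-1}(i) > i > \pi(i)$. Once this identity is proved, comparison with the expansion $D_{G'}^n(I) = I \sum_{i,j} \gamma_{n,i,j} t^i u^j v^{n-i-2j}$ yields the proposition, provided one first checks that the exponent of $v$ is consistent on $\rss_n$, namely $\cdd(\pi) = n - \fix(\pi) - 2\exc(\pi)$.

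For this consistency identity the key observation is that the absence of cycle double ascents in $\pi \in \rss_n$ forces every excedance to be a cyclic valley. Since cyclic peaks and cyclic valleys alternate within each non-trivial cycle, the number of cyclic peaks equals the number of cyclic valleys, hence equals $\exc(\pi)$. As each of the $n$ positions is exactly one of fixed point, cyclic valley, cyclic peak, or cycle double descent, summing gives $n = \fix(\pi) + 2\exc(\pi) + \cdd(\pi)$ as required.

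For the grammatical labeling, I would place a single label $I$ on each $\pi\in\rss_n$, a label $t$ on each fixed point, a label $u$ on each excedance, and a label $v$ on each cycle double descent, and then prove by induction that inserting $n+1$ into $\pi$ corresponds precisely to the four substitution rules of $G'$. Concretely: creating a new singleton cycle $(n+1)$ matches $I \to It$; inserting $n+1$ after a fixed point $a$ (so $(a)$ becomes $(a, n+1)$) promotes $a$ to an excedance, matching $t \to u$; inserting $n+1$ immediately after an excedance $a$ with $\pi(a) = b$ matches $u \to uv$, because $b$ is forced by the no-cycle-double-ascent property to be a cyclic peak, which becomes a cycle double descent in $\pi'$ while $n+1$ becomes a new cyclic peak; and inserting $n+1$ after a cycle double descent $a$ promotes $a$ to an excedance, while $n+1$ becomes a cyclic peak and the type of $b = \pi(a)$ is preserved, matching $v \to u$. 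In each case one also checks that $\pi'$ has no cycle double ascent: the only candidate position is $a$ itself (since $\pi'(n+1) \leqslant n$), and $a$ becomes a cycle double ascent in $\pi'$ precisely when $\pi^{-1}(a) < a$, which excludes exactly the cyclic peaks of $\pi$ as insertion sites.

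The main obstacle will be the bookkeeping in the two middle cases: one must carefully identify the type of $b = \pi(a)$ in $\pi$ and track how it transforms in $\pi'$, using crucially that $\pi$ has no cycle double ascent to pin down $b$ as a cyclic peak or as a cycle double descent. Once this is handled, bijectivity of the insertion procedure follows because each $\pi' \in \rss_{n+1}$ is recovered uniquely by deleting $n+1$ from its cycle (and deletion of the largest letter preserves the simsun-of-the-second-kind property). Induction on $n$ then yields the desired weighted grammatical identity, and comparison with part (B) of Theorem~\ref{mainthm01} gives $\gamma_{n,i,j} = \#\{\pi \in \rss_n : \fix(\pi) = i,\, \exc(\pi) = j\}$.
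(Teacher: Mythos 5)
Your proposal is correct, and it takes essentially the same approach as the paper: both arguments work in standard cycle form and verify the recursion~\eqref{gammanij-recu} for the count of simsun permutations of the second kind by a four-case analysis of where $n+1$ can be inserted (new singleton, after a fixed point, after an excedance, after a cycle double descent), with insertion after a cyclic peak being the forbidden case that would create a cycle double ascent. The paper states these four cases directly and counts the allowed slots in the last case as $n-i-2(j-1)$; you reach the same count via the auxiliary identity $\cdd(\pi)=n-\fix(\pi)-2\exc(\pi)$ and phrase everything as a grammatical labeling for $G'$, which is a slightly more explicit but equivalent packaging of the same insertion argument.
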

\begin{proof}
We write any permutation in $\rss_n$ by using its standard cycle form.
In order to get a permutation $\pi'\in\rss_{n+1}$ with $i$ fixed points and $j$ excedances from a permutation $\pi\in\rss_{n}$,
we distinguish four cases:
\begin{enumerate}
\item [($c_1$)] If $\pi\in \rss_n$ and $\fix(\pi)=i-1$ and $\exc(\pi)=j$, then we need append $(n+1)$ to $\pi$ as a new cycle. This accounts for $\gamma_{n,i-1,j}$ possibilities;
\item [($c_2$)] If $\pi\in \rss_n$ and $\fix(\pi)=i+1$ and $\exc(\pi)=j-1$, then we should insert the entry $n+1$ right after a fixed point. This accounts for $(1+i)\gamma_{n,i+1,j-1}$ possibilities;
 \item [($c_3$)] If $\pi\in \rss_n$ and $\fix(\pi)=i$ and $\exc(\pi)=j$, then we should insert the entry $n+1$ right after an excedance.
This accounts for $j\gamma_{n,i,j}$ possibilities;
 \item [($c_4$)] Since $\pi\in\rss_n$ has no cycle double ascents, we say that $\pi(i)$ is a {\it cycle peak} if $i$ is an excedance, i.e. $i<\pi(i)$. If $\pi\in \rss_n$ and $\fix(\pi)=i$ and $\exc(\pi)=j-1$, then there are $n-i-2(j-1)$ positions could be inserted the entry $n+1$, since
we cannot insert $n+1$ immediately before or right after each cycle peak of $\pi$, and we cannot insert $n+1$ right after a fixed point.
This accounts for $(n-i-2j+2)\gamma_{n,i,j-1}$ possibilities.
 \end{enumerate}
Thus the recursion~\eqref{gammanij-recu} holds. This completes the proof.
\end{proof}

\subsection{Proper left-to-right minimum statistic}\label{section04}
\hspace*{\parindent}

Let $\pi=\pi(1)\pi(2)\cdots\pi(n)\in\msn$.
In this subsection, we always identify $\pi$ with the word $\pi(1)\pi(2)\cdots\pi(n)\pi(n+1)$, where $\pi(n+1)=0$.
For $1\leqslant i\leqslant n$,
a value $\pi(i)$ is called a {\it left-to-right minimum} if $\pi(i)<\pi(j)$ for all $1\leqslant j<i$ or $i=1$.
Let $\lrm(\pi)$ be the number of left-to-right minima of $\pi$.
\begin{definition}
Given $\pi\in\msn$. We say that $\pi(i)$ is a proper left-to-right minimum if it satisfies the following two conditions:
\begin{itemize}
  \item $\pi(i)$ is a left-to-right minimum and $\pi(i)\neq 1$,
  \item  there exists an index $k>i$ such that $\pi(k)=\pi(i)-1$ and $\pi(k)>\pi(k+1)$.
\end{itemize}
\end{definition}
Let $\plrm(\pi)$ be the number of proper left-to-right minima of $\pi$.
\begin{example}
For $\pi\in\ms_3$, we have $$\plrm(123)=\plrm(132)=\plrm(213)=0,$$
$$\plrm(\textbf{2}31)=\plrm(\textbf{3}12)=1,~\plrm(\textbf{32}1)=2.$$
\end{example}
Consider the $\suc$ and $\plrm$ $(s,t)$-Eulerian polynomials
$$A_n(x,y,s,t)=\sum_{\pi\in\msn}x^{\basc(\pi)}y^{\des(\pi)-\plrm(\pi)}s^{\suc(\pi)}t^{\plrm(\pi)}.$$
In particular, $A_1(x,y,s,t)=1,~A_2(x,y,s,t)=s+t,~A_3(x,y,s,t)=(s+t)^2+2xy$.
\begin{lemma}\label{lemmaLMSt}
If
\begin{equation*}\label{LMxyst-G}
G=\{L\rightarrow Lt,M\rightarrow Ms,s\rightarrow xy, t\rightarrow xy, x\rightarrow xy,y\rightarrow xy\},
\end{equation*}
then we have
\begin{equation}\label{derangment-grammarLMAst}
D_{G}^n(LM)=LMA_{n+1}(x,y,s,t)=LM\sum_{\pi\in\ms_{n+1}}x^{\basc(\pi)}y^{\des(\pi)-\plrm(\pi)}s^{\suc(\pi)}t^{\plrm(\pi)}.
\end{equation}
\end{lemma}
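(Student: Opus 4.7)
The plan is to mirror the proof of Lemma~\ref{lemmaLM}, refining the grammatical labeling so that it separates proper left-to-right minima from generic descents. For $\pi\in\ms_{n+1}$, I would adopt exactly the same placement of $L$, $M$, $x$ and $s$ labels as in the proof of Lemma~\ref{lemmaLM}, together with the descent and end-position labels, but upgrade certain $y$'s to $t$'s according to the following rule: a label placed after $\pi(k)$ at a descent position $k$, or placed at the end of $\pi$ when $\pi(n+1)\neq n+1$, is declared $t$ if and only if $\pi(k)+1$ (or $\pi(n+1)+1$ in the end case) is a left-to-right minimum of $\pi$ occurring at some position strictly before $k$; otherwise it is $y$. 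The definition of proper left-to-right minimum guarantees that every such $\pi(i)\neq 1$ has a unique companion index $k>i$ with $\pi(k)=\pi(i)-1$ and $\pi(k)>\pi(k+1)$ (under the end-convention $\pi(n+2)=0$), so this rule sets up a bijection between the $t$-labels of $\pi$ and the proper left-to-right minima of $\pi$. Consequently
$$w(\pi)=LM\,x^{\basc(\pi)}\,y^{\des(\pi)-\plrm(\pi)}\,s^{\suc(\pi)}\,t^{\plrm(\pi)}.$$

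I would then establish~\eqref{derangment-grammarLMAst} by induction on $n$, running through the six insertion scenarios for the new maximum used in the proof of Lemma~\ref{lemmaLM}. Insertion right after the current maximum, after an $x$-labeled big ascent, or after an $s$-labeled succession goes through essentially unchanged: the descent that appears on the displaced old maximum in $\hat\pi$ always carries a plain $y$ label, since the newly inserted maximum is never a left-to-right minimum of $\hat\pi$ when placed at an interior position. The replacement of $L\to Ly$ by $L\to Lt$ reflects the front-insertion case, in which the new maximum automatically becomes a proper left-to-right minimum of $\hat\pi$ because the old maximum necessarily lies at a descent of $\hat\pi$ (an extended one at position $n+2$ if the old maximum was the last entry of $\pi$). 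The genuinely new rule $t\to xy$ handles the sub-cases of the descent-insertion and end-insertion scenarios in which the position after which we insert already carries a $t$-label: there the companion descent is converted into a big ascent, the previously proper left-to-right minimum loses its properness, and the $t$ on that position is replaced by $x$ while a fresh $y$ appears on the displaced maximum.

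The main obstacle will be the bookkeeping of the $y$/$t$ distinction across all six cases, to ensure that no spurious $t$-label is created or destroyed. Three observations make this manageable: first, the newly inserted maximum is never a left-to-right minimum of $\hat\pi$ unless it is placed at the very front, so the front-insertion is the only source of a new proper left-to-right minimum; second, for a proper left-to-right minimum $\pi(i)$ of $\pi$ with companion $\pi(k)$, the descent status of $\pi(k)$ in $\hat\pi$ coincides with its status in $\pi$ unless the insertion occurs immediately after $\pi(k)$, which is precisely the $t$-labeled sub-case governed by $t\to xy$; and third, the left-to-right minimality of any value uninvolved in the insertion is preserved. Assembling these case-by-case label transformations and matching them with the substitution rules of $G$ yields the recursive identity $D_G^n(LM)=LM\,A_{n+1}(x,y,s,t)$ by induction on $n$, completing the proof.
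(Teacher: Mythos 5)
Your proposal is correct and takes essentially the same approach as the paper: you adopt the grammatical labeling that refines the $y$-label of Lemma~\ref{lemmaLM} into a $t$-label precisely when $\pi(i)+1$ is a left-to-right minimum, then induct by inserting the new maximum in the six possible positions. (Your extra clause "occurring at some position strictly before $k$" is automatic --- a left-to-right minimum $\pi(i)+1$ can never lie to the right of $\pi(i)$ --- so your labeling coincides with the paper's.) The paper sets up this labeling and then writes "Along the same lines as in the proof of Lemma~\ref{lemmaLM}, one can discuss the general cases and we omit the details for simplicity"; your case analysis, in particular the observations that front-insertion is the only way to create a new proper left-to-right minimum and that insertion immediately after the companion descent is the only way to destroy one (the $t\to xy$ rule), supplies exactly those omitted details.
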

\begin{proof}
A grammatical labeling of $\pi=\pi(1)\pi(2)\cdots\pi(n)\in \msn$ can be given as follows:
\begin{itemize}
\item [\rm ($i$)]Put a superscript label $L$ at the front of $\pi$;
\item [\rm ($ii$)]Put a superscript label $M$ right after the maximum entry $n$;
  \item [\rm ($iii$)]If $i$ is a big ascent, then put a superscript label $x$ right after $\pi(i)$;
 \item [\rm ($iv$)]If $i$ is a succession, then put a superscript label $s$ right after $\pi(i)$;
  \item [\rm ($v$)] If $i$ is a descent, $\pi(i)\neq n$ and $\pi(i)+1$ is a left-to-right minimum, then put a superscript label $t$ right after $\pi(i)$;
    \item [\rm ($vi$)] If $i$ is a descent, $\pi(i)\neq n$ and $\pi(i)+1$ is not a left-to-right minimum, then put a superscript label $y$ right after $\pi(i)$.
\end{itemize}
Then the weight of $\pi$ is given by $$w(\pi)=LMx^{\basc(\pi)}y^{\des(\pi)-\plrm(\pi)}s^{\suc(\pi)}t^{\plrm(\pi)}.$$
Note that $\ms_1=\{^L1^M\}$ and $\ms_2=\{^L1^s2^M, ^L2^M1^t\}$. Note that $D_{G}(LM)=LM(s+t)$.
Hence the weight of the element in $\ms_1$ is $LM$ and the sum of weights of the elements in $\ms_2$ is given by $D_{G}(LM)$.
Along the same lines as in the proof of Lemma~\ref{lemmaLM}, one can discuss the general cases and we omit the details for simplicity.
\end{proof}
Define 
\begin{align*}
&\rm{Suc^*}(\pi)=\left\{\pi(i): \pi(i+1)=\pi(i)+1,~i\in[n-1]\right\},\\
\rm{Plrmin}(\pi)&=\{\pi(i): \text{$i$ is a descent, $\pi(i)\neq n$ and $\pi(i)+1$ is a left-to-right minimum}\}.
\end{align*}
Using the grammatical labeling given in the proof of Lemma~\ref{lemmaLMSt}, it is routine to verify the following result. An illustration of it is given by Example~\ref{exsuc}.
\begin{proposition}
The pair of set-valued statistics $(\rm{Suc^*},~\rm{Plrmin})$ is symmetric over $\msn$.
\end{proposition}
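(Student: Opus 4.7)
The plan is to construct an explicit involution $\phi: \msn \to \msn$ satisfying $\rm{Suc^*}(\phi(\pi)) = \rm{Plrmin}(\pi)$ and $\rm{Plrmin}(\phi(\pi)) = \rm{Suc^*}(\pi)$ for every $\pi \in \msn$, which is exactly what it means for the pair of set-valued statistics to be symmetric. The key observation is that the grammar $G$ of Lemma~\ref{lemmaLMSt} is invariant under the involution $\sigma$ defined on its alphabet by $L \leftrightarrow M$, $s \leftrightarrow t$, $x \mapsto x$, $y \mapsto y$: under $\sigma$ the pair of rules $L\to Lt$ and $M\to Ms$ is swapped, the pair $s\to xy$ and $t\to xy$ is swapped, and $x\to xy$, $y\to xy$ are fixed, so the entire set of production rules is preserved.

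Next, I would record the construction of any $\pi \in \ms_{n+1}$ as a sequence of $n$ insertions starting from the singleton in $\ms_1$: at the $k$-th step the entry $k+1$ is inserted into a labeled permutation of $\ms_k$ according to exactly one of the six production rules of $G$, as shown in the proof of Lemma~\ref{lemmaLMSt}. Define $\phi$ inductively by $\phi(1)=1$, and, whenever $\pi \in \ms_{n+1}$ is obtained from $\pi^\dagger \in \ms_n$ by the insertion corresponding to a rule $r$, declare $\phi(\pi)$ to be the permutation obtained from $\phi(\pi^\dagger)$ by the insertion corresponding to the rule $\sigma(r)$. Since every labeled permutation carries a unique $L$-label (at the front) and a unique $M$-label (right after the current maximum), the swapped rule is always applicable, and because $\sigma$ is an involution on the rule set, the map $\phi$ is an involution on $\msn$.

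Finally, I would verify the set-valued exchange by induction on $n$. By the labeling scheme of Lemma~\ref{lemmaLMSt}, at every stage the $s$-labels of $\pi$ attach precisely to the values in $\rm{Suc^*}(\pi)$ and the $t$-labels attach precisely to the values in $\rm{Plrmin}(\pi)$. For each of the six insertion rules one checks that an insertion in $\pi$ that creates, preserves or destroys an $s$-label translates, under $\phi$, to the corresponding $\sigma$-insertion in $\phi(\pi)$ that performs the same operation on a $t$-label carried by the same value, and vice versa. Consequently the value sets match exactly: $\rm{Suc^*}(\phi(\pi)) = \rm{Plrmin}(\pi)$ and $\rm{Plrmin}(\phi(\pi)) = \rm{Suc^*}(\pi)$.

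The principal obstacle is this last verification, because the swapped rules $L\to Lt$ and $M\to Ms$ place $n+1$ at structurally different locations (at the front versus immediately after the current maximum), so $\phi$ is not a simple symmetry such as reversal or complementation of permutations. Tracking which specific entry of $\phi(\pi)$ inherits each $s$- or $t$-label therefore requires a careful case-by-case analysis across the six rules, but the invariance of the rule set under $\sigma$ forces everything to line up. Example~\ref{exsuc} then serves to make the resulting bijection transparent for small $n$.
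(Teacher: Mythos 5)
Your proposal is correct and takes essentially the same route as the paper: the paper's own ``proof'' is little more than an appeal to the grammatical labeling of Lemma~\ref{lemmaLMSt} and the symmetry of the grammar under $L\leftrightarrow M$, $s\leftrightarrow t$ (together with the worked Example~\ref{exsuc}), and you have simply made the implied involution $\phi$ explicit by tracking the sequence of rule applications and swapping each rule by $\sigma$. One small but worthwhile point of precision: to make ``insert at the $\sigma(r)$-label carried by the \emph{same value}'' well-defined for the rules $s\to xy$, $t\to xy$, $x\to xy$, $y\to xy$ at every stage, the induction hypothesis must be strengthened to say that all four label types (not just $s$ and $t$) sit on the same multiset of values in $\pi^{\dagger}$ and $\phi(\pi^{\dagger})$; with your convention $\sigma(x)=x$, $\sigma(y)=y$ this extra invariant is exactly what each of the six cases propagates, and it is in fact what Example~\ref{exsuc} implicitly confirms (the $x$- and $y$-labels land on the same values on both sides of the correspondence, even though the paper writes ``$x\leftrightarrow y$''). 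With that strengthened invariant made explicit, the ``careful case-by-case analysis'' you postpone is the same routine verification the paper postpones, so the two proofs coincide in substance.
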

\begin{example}\label{exsuc}
Recall that $\ms_2=\{^L1^s2^M, ^L2^M1^t\}$. Consider the insertion of the entry $3$. Using the correspondences $L\leftrightarrow M$, $s\leftrightarrow t$ and $x\leftrightarrow y$,
the symmetry of the pair of set-valued statistics $(\rm{Suc^*},~\rm{Plrmin})$ is demonstrated as follows: 
\begin{align*}
&\left(\rm{Suc^*}(^L1^s2^s3^M),~\rm{Plrmin}(^L1^s2^s3^M)\right)=(\{1,2\},\emptyset)\leftrightarrow \left(\rm{Suc^*}(^L3^M2^t1^t),~\rm{Plrmin}(^L3^M2^t1^t)\right)=(\emptyset,\{1,2\});\\
&\left(\rm{Suc^*}(^L1^x3^M2^y),~\rm{Plrmin}(^L1^x3^M2^y)\right)=(\emptyset,\emptyset)\leftrightarrow \left(\rm{Suc^*}(^L2^y1^x3^M),~\rm{Plrmin}(^L2^y1^x3^M)\right)=(\emptyset,\emptyset);\\
&\left(\rm{Suc^*}(^L3^M1^s2^t),~\rm{Plrmin}(^L3^M1^s2^t)\right)=(\{1\},\{2\})\leftrightarrow \left(\rm{Suc^*}(^L2^s3^M1^t),~\rm{Plrmin}(^L2^s3^M1^t)\right)=(\{2\},\{1\}).
\end{align*}
\end{example}

The following theorem is easily derived from Lemma~\ref{lemmaLMSt} in the same way as Theorem~\ref{mainthm01}.
\begin{theorem}
For the $\suc$ and $\plrm$ $(s,t)$-Eulerian polynomials,
we have
\begin{equation}\label{Anxyst-recu}
A_{n+1}(x,y,s,t)=(s+t)A_{n}(x,y,s,t)+xy\left(\frac{\partial}{\partial x}+\frac{\partial}{\partial y}+\frac{\partial}{\partial s}+\frac{\partial}{\partial t}\right)A_{n}(x,y,s,t);
\end{equation}
\begin{equation}\label{Anxyst-gamma}
A_{n+1}(x,y,s,t)=\sum_{i=0}^n(s+t)^i\sum_{j=0}^{\lrf{(n-i)/2}}\gamma_{n,i,j}(2xy)^j(x+y)^{n-i-2j},
\end{equation}
which implies that $A_{n+1}(x,y,s,t)$ is symmetric in the variables $s$ and $t$ as well as $x$ and $y$.
\end{theorem}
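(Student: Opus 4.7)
The plan is to imitate, almost line by line, the three-step proof of Theorem~\ref{mainthm01}, using Lemma~\ref{lemmaLMSt} as the starting point in place of Lemma~\ref{lemmaLM}. First, I would establish~\eqref{Anxyst-recu} by induction on $n$ from the identity $D_{G}^{n}(LM)=LM\cdot A_{n+1}(x,y,s,t)$. Writing $A_{n}(x,y,s,t)=\sum_{i,j,k}a_{n,i,j,k}x^iy^js^kt^{n-1-i-j-k}$ (nonneg.~integer coefficients by Lemma~\ref{lemmaLMSt}), applying $D_G$ and using the Leibniz rule together with the substitutions $L\mapsto Lt$, $M\mapsto Ms$, $s\mapsto xy$, $t\mapsto xy$, $x\mapsto xy$, $y\mapsto xy$, the factors $LM$ pull out cleanly, and the remainder collects into exactly the operator $(s+t)+xy\left(\partial_x+\partial_y+\partial_s+\partial_t\right)$ acting on $A_{n}(x,y,s,t)$. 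This yields~\eqref{Anxyst-recu}.

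Next, I would derive~\eqref{Anxyst-gamma} by a change of grammars, which is the part that does the real work. Set
\[
u=2xy,\qquad v=x+y,\qquad \tau=s+t,\qquad I=LM.
\]
A direct computation using the rules of $G$ gives $D_G(u)=uv$, $D_G(v)=u$, $D_G(\tau)=D_G(s)+D_G(t)=2xy=u$, and $D_G(I)=LMs+LMt=I\tau$. Thus in the new alphabet the grammar becomes
\[
G'=\{I\rightarrow I\tau,\ \tau\rightarrow u,\ u\rightarrow uv,\ v\rightarrow u\},
\]
which is formally identical to the grammar~\eqref{JAcobi-gram02} appearing in the proof of Theorem~\ref{mainthm01} (with $t$ there replaced by $\tau$ here). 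Consequently, the same induction that produced~\eqref{DG101} yields
\[
D_{G'}^{n}(I)=I\sum_{i=0}^{n}\tau^{i}\sum_{j=0}^{\lfloor(n-i)/2\rfloor}\gamma_{n,i,j}\,u^{j}v^{n-i-2j},
\]
with exactly the same coefficients $\gamma_{n,i,j}$ as in~\eqref{Anxys-gamma} and satisfying the same recursion~\eqref{gammanij-recu}. Translating back via $u=2xy$, $v=x+y$, $\tau=s+t$ gives~\eqref{Anxyst-gamma}. The symmetry of $A_{n+1}(x,y,s,t)$ in $s,t$ and in $x,y$ is then immediate, since each summand depends on $s,t$ only through $s+t$ and on $x,y$ only through $x+y$ and $xy$.

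The only real obstacle is verifying that the change of variables genuinely produces the grammar~\eqref{JAcobi-gram02}; in particular, one must check that $D_G(\tau)=u$ even though $\tau$ now bundles a ``succession'' variable with a ``proper left-to-right minimum'' variable. This is precisely what makes the grammar for the $(s,t)$-refinement collapse to the same grammar as for Theorem~\ref{mainthm01}, and it is the structural reason the two new statistics $\suc$ and $\plrm$ interact symmetrically; once this identification is made, no further computation is needed, since every conclusion rides on the already-established properties of $G'$.
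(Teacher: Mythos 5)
Your proposal is correct and is exactly the route the paper intends: the paper dispatches this theorem with the single remark that it ``is easily derived from Lemma~\ref{lemmaLMSt} in the same way as Theorem~\ref{mainthm01},'' and your change of variables $u=2xy$, $v=x+y$, $\tau=s+t$, $I=LM$ is precisely the computation that carries this out, collapsing the grammar of Lemma~\ref{lemmaLMSt} to the grammar~\eqref{JAcobi-gram02} and importing the expansion~\eqref{DG101} verbatim. The key check you single out, that $D_G(\tau)=D_G(s)+D_G(t)=2xy=u$, is indeed the point that makes the reduction go through, and both your derivation of the recursion~\eqref{Anxyst-recu} and your reading off of the $(s,t)$- and $(x,y)$-symmetry from~\eqref{Anxyst-gamma} are sound.
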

\begin{corollary}
We have
\begin{equation*}
\sum_{\pi\in\ms_{n+1}}x^{\basc(\pi)}y^{\des(\pi)-\plrm(\pi)}s^{\suc(\pi)}(-s)^{\plrm(\pi)}=\sum_{j=0}^{\lrf{n/2}}\gamma_{n,0,j}(2xy)^j(x+y)^{n-2j}.
\end{equation*}
\end{corollary}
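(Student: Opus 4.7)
The plan is to obtain this identity as a one-line specialization of equation \eqref{Anxyst-gamma}, the $\gamma$-expansion of the $\suc$ and $\plrm$ $(s,t)$-Eulerian polynomial. First I would rewrite the left-hand side of the corollary in terms of $A_{n+1}(x,y,s,t)$. Since by definition
$$A_{n+1}(x,y,s,t)=\sum_{\pi\in\ms_{n+1}}x^{\basc(\pi)}y^{\des(\pi)-\plrm(\pi)}s^{\suc(\pi)}t^{\plrm(\pi)},$$
the substitution $t=-s$ produces exactly the alternating sum appearing on the left-hand side of the corollary.

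Next I would substitute $t=-s$ on the right-hand side of \eqref{Anxyst-gamma}. The outer factor $(s+t)^i$ becomes $0^i$, which, under the standard convention $0^0=1$, vanishes for every $i\geqslant 1$ and equals $1$ for $i=0$. Consequently only the $i=0$ slice of the double sum survives, and it reduces to
$$\sum_{j=0}^{\lrf{n/2}}\gamma_{n,0,j}(2xy)^j(x+y)^{n-2j},$$
which is precisely the right-hand side of the corollary. Equating the two sides finishes the proof.

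There is essentially no obstacle here, since the corollary is a direct specialization of a previously established identity; the only minor point to record is the $0^0=1$ convention used when extracting the $i=0$ term. Structurally, the substitution $t=-s$ plays in the $(s,t)$-refined polynomial the same role that $y\mapsto -y$ played earlier in the derivation of the $\gamma$-positivity of the binomial convolution of derangement polynomials: it kills every contribution with a positive power of $s+t$ and isolates the manifestly $\gamma$-positive ``base'' expansion in the variables $2xy$ and $x+y$. As a byproduct, the identity shows that the resulting alternating sum over $\ms_{n+1}$ is itself $\gamma$-positive, with $\gamma$-coefficients exactly $\gamma_{n,0,j}$, i.e., the number of $0$-$1$-$2$ increasing planted trees on $\{0,1,\dots,n\}$ with $j$ leaves and no leaf attached to the root.
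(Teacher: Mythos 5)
Your proof is correct and is exactly the argument the paper intends: the corollary is stated immediately after equation~\eqref{Anxyst-gamma} as a direct specialization, obtained by setting $t=-s$ in $A_{n+1}(x,y,s,t)$ so that the factor $(s+t)^i$ kills every term with $i\geqslant 1$ and leaves only the $i=0$ slice of the $\gamma$-expansion. The remark about the $0^0=1$ convention and the analogy with the earlier $y\mapsto -y$ specialization are accurate but not needed.
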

A special case of~\eqref{Anxyst-gamma} says that $A_{n+1}(x,1,s,t)$ is partial $\gamma$-positive, i.e.,
$$A_{n+1}(x,1,s,t)=\sum_{i=0}^n(s+t)^i\sum_{j=0}^{\lrf{(n-i)/2}}\gamma_{n,i,j}(2x)^j(x+1)^{n-i-2j}.$$
Comparing~\eqref{Anxyst-gamma} with Corollary~\ref{corAnx10}, we see that $A_{n}(x,1,s,t)$ is bi-$\gamma$-positive if $s+t=1$.
Combining this with~\cite[Theorem~2.4]{Ma2401}, we obtain the following result.
\begin{corollary}
Let $s$ and $t$ be given real numbers such that $0\leqslant s+t\leqslant 1$, then
$A_{n}(x,1,s,t)$ is alternatingly increasing.
\end{corollary}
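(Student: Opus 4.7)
The plan is to reduce the claim to Theorem~2.4 of~\cite{Ma2401} via the $\gamma$-expansion in~\eqref{Anxyst-gamma}. Specializing~\eqref{Anxyst-gamma} at $y=1$ gives
\begin{equation*}
A_{n+1}(x,1,s,t) = \sum_{i=0}^{n}(s+t)^i\sum_{j=0}^{\lrf{(n-i)/2}}\gamma_{n,i,j}(2x)^j(x+1)^{n-i-2j},
\end{equation*}
so $A_n(x,1,s,t)$ depends on the pair $(s,t)$ only through the sum $\alpha := s+t$. Each inner block $B_{n,i}(x) := \sum_j \gamma_{n,i,j}(2x)^j(x+1)^{n-i-2j}$ is a symmetric $\gamma$-positive polynomial with nonnegative coefficients, of apparent degree $n-i$.

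The key input from the paragraph immediately preceding the corollary is that at the boundary value $\alpha = 1$, comparison of~\eqref{Anxyst-gamma} with Corollary~\ref{corAnx10} yields bi-$\gamma$-positivity of $A_n(x,1,s,t)$; in particular, its symmetric decomposition $(a(x), b(x))$ at that endpoint consists of two $\gamma$-positive polynomials. With this fact in hand, I will invoke Theorem~2.4 of~\cite{Ma2401}, whose role is to propagate bi-$\gamma$-positivity at a boundary value of a parameter to the alternatingly increasing property throughout the corresponding interval. Applying it directly to our polynomial $A_n(x,1,s,t)$, regarded as a one-parameter family in $\alpha = s+t$, yields the desired alternatingly increasing property on the full range $0 \leqslant s+t \leqslant 1$.

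The main obstacle, were one to attempt a self-contained derivation, would be to produce the symmetric decomposition $(a_\alpha(x), b_\alpha(x))$ of $A_n(x,1,s,t)$ for arbitrary $\alpha \in [0,1]$ and verify the Br\"and\'en--Solus criterion that both components are unimodal with nonnegative coefficients. Concretely, one would have to track how the symmetric $\gamma$-positive summands $(2x)^j(x+1)^{n-i-2j}$, occurring at different apparent degrees $n-i$, aggregate through the symmetric decomposition into $a_\alpha$ and $xb_\alpha$, and check nonnegativity and unimodality uniformly in $\alpha \in [0,1]$. This delicate combinatorial-algebraic verification is exactly what Theorem~2.4 of~\cite{Ma2401} packages into a black-box statement, making the present corollary follow in a single step.
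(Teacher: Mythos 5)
Your argument follows the same route as the paper: specialize the partial $\gamma$-expansion~\eqref{Anxyst-gamma} at $y=1$ so that $A_n(x,1,s,t)$ depends only on $\alpha=s+t$, observe via Corollary~\ref{corAnx10} that at $\alpha=1$ the polynomial is bi-$\gamma$-positive, and then invoke Theorem~2.4 of~\cite{Ma2401}. Your gloss on what that cited theorem does is slightly imprecise (its precise hypotheses involve the partial $\gamma$-positive structure, not merely endpoint bi-$\gamma$-positivity), but the logical chain you assemble matches the paper's proof.
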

\section{Relationship to $\fix$ and $\cyc$ (p,q)-Eulerian polynomials}\label{section04}
\subsection{A fundamental lemma on (p,q)-Eulerian polynomials}
\hspace*{\parindent}

The $\fix$ and $\cyc$ {\it $(p,q)$-Eulerian polynomials} $A_n(x,y,p,q)$ are defined by
\begin{equation*}
A_n(x,y,p,q)=\sum_{\pi\in\msn}x^{\exc(\pi)}y^{\drop(\pi)}p^{\fix(\pi)}q^{\cyc(\pi)}.
\end{equation*}
This $(p,q)$-Eulerian polynomial contains a great deal of information about permutations and colored permutations, see~\cite{Zeng02,Ma2401,Mongelli13} for details.
In particular, according to Theorem~\cite[Theorem~3.6]{Ma2401}, when $0\leqslant p\leqslant 1$ and $0\leqslant q\leqslant 1$,
the polynomials $A_n(x,1,p,q)$ are alternatingly increasing.
The following result will be used repeatedly in our discussion.
\begin{lemma}[{\cite[Lemma~3.12,~Theorem~3.4]{Ma2401}}]\label{G1G2}
If
\begin{equation}\label{G1}
G_1=\{I\rightarrow Ipq, p\rightarrow xy, x\rightarrow xy, y\rightarrow xy\},
\end{equation}
then we have
$$D_{G_1}^n(I)=I\sum_{\pi\in\msn}x^{\exc(\pi)}y^{\drop(\pi)}p^{\fix(\pi)}q^{\cyc(\pi)}.$$
Consider the change of variable $u=xy$ and $v=x+y$. Then $D_{G_1}(I)=Ipq,~D_{G_1}(p)=u$, $D_{G_1}(u)=uv,~D_{G_1}(v)=2u$.
Setting
\begin{equation}\label{G2}
G_2=\{I\rightarrow Ipq, p\rightarrow u, u\rightarrow uv, v\rightarrow 2u\},
\end{equation}
then we get
\begin{equation}\label{DG2}
D_{G_2}^n(I)=I\sum_{i=0}^np^i\sum_{j=0}^{\lrf{(n-i)/2}}\gamma_{n,i,j}(q)u^jv^{n-i-2j},
\end{equation}
where \begin{equation}\label{bnij-combin}
\gamma_{n,i,j}(q)=\sum_{\pi\in \ms_{n,i,j}}q^{\cyc(\pi)}
\end{equation}
and $\ms_{n,i,j}=\{\pi\in\msn: \cda(\pi)=0,~\fix(\pi)=i,~\exc(\pi)=j\}$.
\end{lemma}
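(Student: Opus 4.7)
The plan is to prove the lemma by the grammatical labeling method, combined with a change-of-variables argument. Writing each $\pi\in\msn$ in its standard cycle form, I would first equip it with the following labeling: prefix $I$ to the whole permutation, assign a $q$ to each cycle (attached at its minimum element), a $p$ to each fixed point, an $x$ to each excedance position, and a $y$ to each drop position. The product of these labels is $I\,x^{\exc(\pi)}y^{\drop(\pi)}p^{\fix(\pi)}q^{\cyc(\pi)}$. The first assertion then follows by induction on $n$ once I verify that the $n+1$ possible insertions of $n+1$ into $\pi\in\msn$ realise exactly the four substitution rules of $G_1$: starting a new singleton cycle $(n+1)$ gives $I\to Ipq$; inserting $n+1$ immediately after a fixed point $k$ turns $(k)$ into the $2$-cycle $(k,n+1)$, replacing a $p$-label by an $xy$-pair ($p\to xy$); and inserting $n+1$ into the arc $i\to\pi(i)$ when $i$ is an excedance (respectively a drop) splits that arc into an excedance--drop pair, giving $x\to xy$ (respectively $y\to xy$).

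For the second assertion, the direct computations $D_{G_1}(xy)=xy(x+y)$, $D_{G_1}(x+y)=2xy$ and $D_{G_1}(p)=xy$ show that after the change of variables $u=xy$ and $v=x+y$, the operator $D_{G_1}$ restricts to $D_{G_2}$ on the subalgebra $\mathbb{Q}[I,p,q,u,v]$. A routine induction on $n$ using the rules of $G_2$ then produces an expansion $D_{G_2}^n(I)=I\sum_{i,j}p^i\gamma_{n,i,j}(q)u^jv^{n-i-2j}$ for certain polynomials $\gamma_{n,i,j}(q)$ with nonnegative coefficients, together with an explicit recursion for them read off from the rules of $G_2$.

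The main obstacle, and the bulk of the work, is to identify $\gamma_{n,i,j}(q)$ combinatorially as $\sum_{\pi\in\ms_{n,i,j}}q^{\cyc(\pi)}$. My approach is to construct a second grammatical labeling, this time supported on the family of permutations with no cycle double ascents. The key structural observation is that for such a permutation, excedance values coincide with cycle peaks and excedance positions coincide with cycle valleys, so a permutation $\pi\in\ms_{n,i,j}$ has exactly $j$ peaks, $j$ valleys, and $n-i-2j$ cycle double descents. One then assigns labels $I,p,q,u,v$ to appropriate features (cycles, fixed points, peaks/valleys, and double descents) so that the total weight of $\pi$ equals $I\,p^iq^{\cyc(\pi)}u^jv^{n-i-2j}$, and verifies that every insertion of $n+1$ that preserves the no-cycle-double-ascent property corresponds to exactly one of the four substitution rules of $G_2$.

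The hardest step is making the substitution $v\to 2u$ precise: for each $v$-labeled feature in $\pi$, one must exhibit exactly two insertion slots that yield a valid element of $\bigcup_{i',j'}\ms_{n+1,i',j'}$ and carry out the relabeling, while simultaneously showing that every other insertion slot either falls into one of the preceding cases or produces a forbidden cycle double ascent. This requires a delicate case analysis on the local cycle structure near each double descent, each peak, and each valley, together with careful bookkeeping of how the labels of neighbouring vertices shift after insertion. Once these details are in order, induction on $n$ gives the stated combinatorial interpretation of $\gamma_{n,i,j}(q)$, completing the lemma.
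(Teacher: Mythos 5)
First, note that the paper itself does \emph{not} prove Lemma~\ref{G1G2}: it simply imports the statement from \cite[Lemma~3.12, Theorem~3.4]{Ma2401} without any argument. There is therefore no in-paper proof against which to compare your attempt, and what follows is a critique on the merits.

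Your treatment of the first two assertions is correct and routine. The cycle-form grammatical labeling for $G_1$ (prefix $I$, a $q$ on each cycle, $p$ on each fixed point, $x$ after each excedance position, $y$ after each drop position) is exactly the standard one and does verify $D_{G_1}^n(I)=I\sum_{\pi\in\msn}x^{\exc(\pi)}y^{\drop(\pi)}p^{\fix(\pi)}q^{\cyc(\pi)}$. The change of variables $u=xy$, $v=x+y$ closing up the subalgebra $\mathbb{Q}[I,p,q,u,v]$ under $D_{G_1}$ and yielding $G_2$ is also immediate.

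There is, however, a genuine gap in your plan for the third assertion. An insertion-based grammatical labeling of the type you propose inductively builds permutations of $[n+1]$ from permutations of $[n]$ by adding the letter $n+1$, so it inherently characterizes the family of permutations $\pi$ for which \emph{every} successive truncation obtained by deleting the largest remaining letter has no cycle double ascent --- that is, what this paper calls simsun permutations of the second kind (this is precisely the argument used in Proposition~\ref{combinatorial-inter}). But the set $\ms_{n,i,j}=\{\pi\in\msn:\cda(\pi)=0,\;\fix(\pi)=i,\;\exc(\pi)=j\}$ in the lemma imposes $\cda(\pi)=0$ only at the top level, and this set is \emph{not} closed under deletion of the largest letter. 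For instance, $\pi=(1,4,2,3)=4312$ has $\cda(\pi)=0$, $\fix(\pi)=0$, $\exc(\pi)=2$, $\cyc(\pi)=1$, but deleting $4$ yields $(1,2,3)=231$, in which the value $2$ satisfies $\pi^{-1}(2)=1<2<\pi(2)=3$, a cycle double ascent. So $\pi\in\ms_{4,0,2}$ is not reachable by the insertion map from the no-cycle-double-ascent family in $\ms_3$, and your proposed induction cannot account for it. Your argument, if completed, would prove a combinatorial interpretation of $\gamma_{n,i,j}(q)$ in terms of simsun permutations of the second kind, and to obtain the statement of the lemma one still needs the nontrivial fact that the triple $(\fix,\exc,\cyc)$ is equidistributed over that family and over $\{\pi:\cda(\pi)=0\}$ --- a fact that in the literature is established by a different mechanism (continued fractions and the Foata--Zeilberger bijection to Laguerre histories, as in Shin--Zeng), not by a direct insertion argument.
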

\subsection{Four-variable polynomials}
\hspace*{\parindent}

We can now present the first result of this section.
\begin{theorem}\label{Anpq}
We have
$$\sum_{\pi\in\ms_{n+1}}x^{\basc(\pi)}y^{\des(\pi)-\plrm(\pi)}s^{\suc(\pi)}t^{\plrm(\pi)}=\sum_{\pi\in\msn}x^{\exc(\pi)}y^{\drop(\pi)}{\left(\frac{t+s}{2}\right)}^{\fix(\pi)}2^{\cyc(\pi)}.$$
When $t=y$, it reduces to
$$\sum_{\pi\in\ms_{n+1}}x^{\basc(\pi)}y^{\des(\pi)}s^{\suc(\pi)}=\sum_{\pi\in\msn}x^{\exc(\pi)}y^{\drop(\pi)}{\left(\frac{y+s}{2}\right)}^{\fix(\pi)}2^{\cyc(\pi)}.$$
\end{theorem}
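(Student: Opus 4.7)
The plan is to apply the change-of-grammars technique, which reconciles Lemma~\ref{lemmaLMSt} (a grammar for the LHS) with Lemma~\ref{G1G2} (a grammar for the $(p,q)$-Eulerian polynomials that appear on the RHS after the specialization $p=(s+t)/2$, $q=2$).

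First, I would start from Lemma~\ref{lemmaLMSt} in the form $D_{G_L}^n(LM)=LM\cdot A_{n+1}(x,y,s,t)$, where
\[
G_L=\{L\rightarrow Lt,\,M\rightarrow Ms,\,s\rightarrow xy,\,t\rightarrow xy,\,x\rightarrow xy,\,y\rightarrow xy\},
\]
and introduce the auxiliary variables $I=LM$, $u=xy$, $v=x+y$. A direct computation gives $D_{G_L}(I)=I(s+t)$, $D_{G_L}(s)=D_{G_L}(t)=u$, $D_{G_L}(u)=uv$ and $D_{G_L}(v)=2u$, so the action of $G_L$ can equivalently be run through the reduced grammar
\[
G_L'=\{I\rightarrow I(s+t),\,s\rightarrow u,\,t\rightarrow u,\,u\rightarrow uv,\,v\rightarrow 2u\}
\]
on the alphabet $\{I,s,t,u,v\}$.

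Second, I compare $G_L'$ with the grammar $G_2=\{I\rightarrow Ipq,\,p\rightarrow u,\,u\rightarrow uv,\,v\rightarrow 2u\}$ of Lemma~\ref{G1G2}. Let $\phi$ be the substitution defined by $p\mapsto(s+t)/2$, $q\mapsto 2$, and the identity on $I,u,v$. A generator-by-generator check shows that $\phi$ intertwines the two derivations:
\[
\phi\bigl(D_{G_2}(I)\bigr)=\phi(Ipq)=I(s+t)=D_{G_L'}(I),\qquad \phi\bigl(D_{G_2}(p)\bigr)=u=D_{G_L'}\!\left(\tfrac{s+t}{2}\right),
\]
while $D(2)=0$ and the rules for $u$ and $v$ agree literally. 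By induction on $n$ and the Leibniz rule it follows that $D_{G_L'}^n(I)=D_{G_2}^n(I)\big|_{p=(s+t)/2,\,q=2}$.

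Finally, I reinstate $u=xy$ and $v=x+y$. The left side becomes $LM\cdot A_{n+1}(x,y,s,t)$ by Lemma~\ref{lemmaLMSt}, while Lemma~\ref{G1G2} turns the right side into
\[
LM\sum_{\pi\in\msn}x^{\exc(\pi)}y^{\drop(\pi)}\left(\tfrac{s+t}{2}\right)^{\fix(\pi)}2^{\cyc(\pi)}.
\]
Cancelling $LM$ delivers the first identity of Theorem~\ref{Anpq}, and the second follows by setting $t=y$, since $y^{\des(\pi)-\plrm(\pi)}y^{\plrm(\pi)}=y^{\des(\pi)}$. The only delicate point is the commutativity of $\phi$ with both derivations; this in turn rests on the symmetric treatment $D_{G_L}(s)=D_{G_L}(t)=xy$ in Lemma~\ref{lemmaLMSt}, which is the grammatical incarnation of the $(\suc,\plrm)$ symmetry exploited in Section~\ref{section03}.
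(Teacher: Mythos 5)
Your proposal is correct and takes essentially the same route as the paper: a change of grammars from the grammar of Lemma~\ref{lemmaLMSt} to the grammar $G_1$ of Lemma~\ref{G1G2} via the substitution $I=LM$, $p=\frac{s+t}{2}$, $q=2$. The only difference is that you pass through the reduced alphabet $\{I,s,t,u,v\}$ and compare with $G_2$ rather than matching $G_1$ directly on $\{I,p,q,x,y\}$; this is a cosmetic detour, not a different argument.
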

\begin{proof}
Consider a change of the grammar $G$ given by Lemma~\ref{LMxyst-G}. Set $LM=I,t+s=pq$, where $p=\frac{t+s}{2},q=2$, then
we get the substitution rules defined by~\eqref{G1}.
By Lemma~\eqref{G1G2}, we immediately get the desired expression.
This completes the proof.
\end{proof}

Combining Theorem~\ref{Anpq} and Propositions~\ref{Pnx-bi-gamma} and~\ref{Pnxspiral}, we get the following.
\begin{corollary}
For any $n\geqslant 1$, the following two polynomials are alternatingly increasing and spiral, respectively:
$$\sum_{\pi\in\msn}x^{\exc(\pi)}2^{\cyc(\pi)-\fix(\pi)},~\sum_{\pi\in\msn}x^{\exc(\pi)}2^{\cyc(\pi)}.$$
\end{corollary}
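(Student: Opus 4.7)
The plan is to specialize the second displayed identity of Theorem~\ref{Anpq} at two well-chosen values of the pair $(s,y)$, so that each of the two target polynomials is recognized as an instance of $A_{n+1}(x,1,\cdot)$ already controlled in Section~\ref{section03}, and then to invoke the corresponding positivity result proved there.

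First, I would take $s=0$ and $y=1$ in
\begin{equation*}
\sum_{\pi\in\ms_{n+1}}x^{\basc(\pi)}y^{\des(\pi)}s^{\suc(\pi)}=\sum_{\pi\in\msn}x^{\exc(\pi)}y^{\drop(\pi)}\left(\frac{y+s}{2}\right)^{\fix(\pi)}2^{\cyc(\pi)}.
\end{equation*}
The factor $s^{\suc(\pi)}$ vanishes unless $\suc(\pi)=0$, i.e.\ unless $\pi\in\mq_{n+1}$, on which $\basc$ and $\asc$ coincide; hence the left-hand side collapses to $A_{n+1}(x,1,0)$ as defined in~\eqref{AnxysDef}, while the right-hand side becomes precisely $\sum_{\pi\in\msn}x^{\exc(\pi)}2^{\cyc(\pi)-\fix(\pi)}$. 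Corollary~\ref{corAnx10} asserts that $A_{n+1}(x,1,0)$ is bi-$\gamma$-positive, and by the Br\"and\'en--Solus criterion recalled in Section~\ref{section02}, any bi-$\gamma$-positive polynomial is alternatingly increasing. This gives the first assertion.

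Next, I would set $s=y=1$ in the same identity. The left-hand side reduces to the big-ascent polynomial $A_{n+1}(x,1,1)$, while the right-hand side becomes $\sum_{\pi\in\msn}x^{\exc(\pi)}2^{\cyc(\pi)}$. Proposition~\ref{Pnxspiral} states that $A_m(x,1,1)$ is spiral for every $m\geq 2$, which identifies the second polynomial as spiral for all $n\geq 1$ (the case $n=1$ producing the constant $2$, which is trivially spiral).

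I do not anticipate any substantive obstacle: both parts are obtained by a direct substitution into Theorem~\ref{Anpq} combined with the citation of one earlier result. The only genuine bookkeeping point is in the first specialization, where one must verify that the vanishing of $s$ really converts the left-hand side into $A_{n+1}(x,1,0)$, which requires the observation that $\basc=\asc$ on succession-free permutations.
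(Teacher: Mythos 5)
Your proof is correct and follows essentially the same route as the paper: the paper's terse justification reads ``Combining Theorem~\ref{Anpq} and Propositions~\ref{Pnx-bi-gamma} and~\ref{Pnxspiral}, we get the following,'' and your two specializations $(s,y)=(0,1)$ and $(s,y)=(1,1)$ of the second identity in Theorem~\ref{Anpq} are exactly the intended ones. The only cosmetic difference is that for the first polynomial you cite Corollary~\ref{corAnx10} directly rather than Proposition~\ref{Pnx-bi-gamma}; since Corollary~\ref{corAnx10} is precisely the bi-$\gamma$-positivity statement for $A_n(x,1,0)$ (and is derived in the paper from Proposition~\ref{Pnx-bi-gamma} together with the identity $xA_n(x,1,0)=P_n(x)$), this is a slightly cleaner citation but not a different argument.
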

\subsection{Five-variable polynomials}
\hspace*{\parindent}

In this subsection,
we shall consider the joint distribution of the numbers of successions, exterior peaks, double ascents and double descents.
We need some more definitions. For $\pi\in\msn$, let $\pi(0)=\pi(n+1)=0$. Then for $i\in [n]$, any entry $\pi(i)$ can be
classified according to one of the four cases:
\begin{itemize}
  \item a peak if $\pi(i-1)<\pi(i)>\pi(i+1)$;
  \item a valley if $\pi(i-1)>\pi(i)>\pi(i+1)$;
  \item a double ascent if $\pi(i-1)<\pi(i)<\pi(i+1)$;
  \item a double descent if $\pi(i-1)>\pi(i)>\pi(i+1)$.
\end{itemize}
Let $\pk(\pi)$ (resp.~$\val(\pi)$,~$\dasc(\pi)$,~$\ddes(\pi)$) denote the number of peaks (resp.~valleys, double ascents,
double descents) in $\pi$. It is clear that $\pk(\pi)=\val(\pi)+1$.
In recent years, these statistics have been extensively studied by using various techniques, including continued fractions~\cite{Zeng16,Sokal22} and
noncommutative symmetric functions~\cite{Gessel20,Zhuang16}.

\begin{definition}
We say that a value $\pi(i)$ is a {\it simsun succession} of $\pi$ if $\pi(i)+1$ lies to the right of $\pi(i)$ and
all the values (if any) between $\pi(i)$ and $\pi(i)+1$ are greater than $\pi(i)+1$.
\end{definition}
Let $\ssuc(\pi)$ denote the number of simsun successions of $\pi$. Clearly, $\suc(\pi)\leqslant \ssuc(\pi)$.
\begin{example}
For $\pi\in\ms_3$, we have $$\ssuc(\textbf{1}\textbf{2}3)=2,~\ssuc(\textbf{1}32)=1,~\ssuc(213)=0,$$
$$\ssuc(\textbf{2}31)=\ssuc(3\textbf{1}2)=1,~\ssuc({32}1)=0.$$
\end{example}

Consider the generalized Eulerian polynomials
$$A_n(\alpha_1,\alpha_2,\alpha_3,\alpha_4,s)=\sum_{\pi\in\msn}\alpha_1^{\pk(\pi)}\alpha_2^{\val(\pi)}\alpha_3^{\dasc(\pi)}\alpha_4^{\ddes(\pi)}s^{\ssuc(\pi)}.$$
In particular, $A_1(\alpha_1,\alpha_2,\alpha_3,\alpha_4,s)=\alpha_1$ and $A_2(\alpha_1,\alpha_2,\alpha_3,\alpha_4,s)=\alpha_1(s\alpha_3+\alpha_4)$.
\begin{theorem}\label{thms}
Let be $\gamma_{n,i,j}(q)$ defined by~\eqref{bnij-combin}. Then
we have
$$A_{n+1}(\alpha_1,\alpha_2,\alpha_3,\alpha_4,s)=\sum_{i=0}^n\left(\frac{s\alpha_3+
\alpha_4}{s+1}\right)^i\sum_{j=0}^{\lrf{(n-i)/2}}\gamma_{n,i,j}(s+1)\alpha_1^{j+1}\alpha_2^j(\alpha_3+\alpha_4)^{n-i-2j}.$$
In particular, $$A_{n+1}(1,1,1,1,s)=(1+s)(2+s)\cdots (n+s)=\sum_{k=0}^n\stirling{n}{k}(s+1)^k,$$
where $\stirling{n}{k}$ is the unsigned Stirling numbers of the first kind, i.e., $\stirling{n}{k}=\#\{\pi\in\msn: \cyc(\pi)=k\}$.
\end{theorem}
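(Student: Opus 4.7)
The plan is to adapt the grammatical-calculus strategy used in the proof of Theorem~\ref{Anpq}: first set up a context-free grammar whose iterated derivative produces $A_{n+1}(\alpha_1,\alpha_2,\alpha_3,\alpha_4,s)$, then perform a change of variables that transforms this grammar into the grammar $G_2$ of Lemma~\ref{G1G2}, and finally extract the desired $\gamma$-expansion from that lemma. For the ``in particular'' statement, an additional evaluation combined with the classical cycle--Stirling identity will close the argument.

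First I would introduce a grammatical labeling on $\pi\in\msn$ in which each position receives the label $\alpha_1,\alpha_2,\alpha_3$ or $\alpha_4$ according as it is a peak, valley, double ascent, or double descent, and the resulting monomial is then multiplied by $s^{\ssuc(\pi)}$. A case analysis of the $n+1$ ways of inserting the letter $n+1$ shows which position types change locally, and exactly when the old maximum $n$ becomes a new simsun succession (namely, only when $n+1$ is inserted immediately to the right of~$n$). Using the identity $\pk(\pi)=\val(\pi)+1$ to absorb into $D_G(\alpha_2)$ the over-counting produced by treating every peak as ``special'', one verifies that the grammar
\[
G=\{\alpha_1\to \alpha_1(s\alpha_3+\alpha_4),\ \alpha_2\to \alpha_2\alpha_3(1-s),\ \alpha_3\to \alpha_1\alpha_2,\ \alpha_4\to \alpha_1\alpha_2,\ s\to 0\}
\]
satisfies $D_G^n(\alpha_1)=A_{n+1}(\alpha_1,\alpha_2,\alpha_3,\alpha_4,s)$. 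This per-permutation verification is the technical heart of the argument.

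With the grammar in place, I would introduce the change of variables $I=\alpha_1$, $u=\alpha_1\alpha_2$, $v=\alpha_3+\alpha_4$, $p=(s\alpha_3+\alpha_4)/(s+1)$, $q=s+1$. A direct computation with the Leibniz rule (using the rules of $G$) gives $D_G(I)=Ipq$, $D_G(p)=u$, $D_G(u)=uv$, $D_G(v)=2u$ and $D_G(q)=0$, which are precisely the substitution rules of the grammar $G_2$ in Lemma~\ref{G1G2}. That lemma therefore delivers
\[
D_G^n(\alpha_1)=\alpha_1\sum_{i=0}^n p^i\sum_{j=0}^{\lrf{(n-i)/2}}\gamma_{n,i,j}(q)\,u^jv^{n-i-2j},
\]
and substituting back the definitions of $u$, $v$, $p$, $q$ produces exactly the formula asserted in the theorem.

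For the specialization $\alpha_1=\alpha_2=\alpha_3=\alpha_4=1$, the substitutions give $p=1$, $u=1$, $v=2$, $q=s+1$, which under the inverse of the change of variables correspond to $x=y=1$ in the original grammar $G_1$ of Lemma~\ref{G1G2}. That lemma identifies $D_{G_1}^n(I)$ with $I\sum_{\pi\in\msn}x^{\exc(\pi)}y^{\drop(\pi)}p^{\fix(\pi)}q^{\cyc(\pi)}$, so at $x=y=p=1$ we obtain $A_{n+1}(1,1,1,1,s)=\sum_{\pi\in\msn}(s+1)^{\cyc(\pi)}$. The classical identity $\sum_{\pi\in\msn}z^{\cyc(\pi)}=\sum_{k}\stirling{n}{k}z^k=z(z+1)\cdots(z+n-1)$ applied with $z=s+1$ then yields $(s+1)(s+2)\cdots(s+n)$. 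The main obstacle will be pinning down the correct substitution for $\alpha_2$ in the initial step---in particular, justifying the $(1-s)$ factor via the peak/valley identity and tracking the simsun update carefully---after which the change-of-variable step and the invocation of Lemma~\ref{G1G2} are essentially mechanical.
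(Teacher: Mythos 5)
Your high-level plan matches the paper's (grammatical labeling $\to$ change of variables $\to$ Lemma~\ref{G1G2} $\to$ substitute back), but your grammar is set up differently from the paper's, and the difference is worth flagging because your ``per-permutation verification'' is not quite what it sounds like. The paper introduces an extra marker $M$ occupying the gap immediately to the right of the maximum entry and uses the grammar
\[
G=\{\alpha_1\to\alpha_1\alpha_4,\ \alpha_2\to\alpha_2\alpha_3,\ \alpha_3\to\alpha_1\alpha_2,\ \alpha_4\to\alpha_1\alpha_2,\ M\to sM\alpha_3\}
\]
acting on $M\alpha_1$, so that the rule $M\to sM\alpha_3$ isolates, with positive weight, the single insertion that creates a new simsun succession. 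You delete $M$ and fold that case into $\alpha_1\to\alpha_1(s\alpha_3+\alpha_4)$, compensating the resulting over-count with $\alpha_2\to\alpha_2\alpha_3(1-s)$. This does work algebraically: on any weight monomial $\alpha_1^{\pk}\alpha_2^{\pk-1}\alpha_3^{\dasc}\alpha_4^{\ddes}s^{\ssuc}$ one finds $\pk(s\alpha_3+\alpha_4)+(\pk-1)(1-s)\alpha_3 = s\alpha_3 + \pk\alpha_4 + (\pk-1)\alpha_3$, the coefficient that the honest gap-by-gap analysis produces, and the constraint ``exponent of $\alpha_1$ $=$ exponent of $\alpha_2$ $+1$'' (the $\pk=\val+1$ identity) is preserved by every iteration, so the cancellation is legitimate throughout. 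But because of the $(1-s)$ factor your grammar cannot be realized as a literal insertion labeling; the verification you call ``the technical heart'' is really this algebraic redistribution argument, not a case-by-case labeling as in the paper. The paper's extra marker buys combinatorial transparency at the price of one more symbol; your version buys a shorter starting monomial at the price of a sign that has to be explained away. From that point on, your change of variables $I=\alpha_1$, $u=\alpha_1\alpha_2$, $v=\alpha_3+\alpha_4$, $p=(s\alpha_3+\alpha_4)/(s+1)$, $q=s+1$ reproducing $G_2$, the invocation of Lemma~\ref{G1G2}, and the route to the Stirling specialization via $x=y=p=1$ and $\sum_{\pi\in\msn}z^{\cyc(\pi)}=z(z+1)\cdots(z+n-1)$ are all exactly as in the paper (the paper reaches $G_2$ through an intermediate grammar with $b=s\alpha_3+\alpha_4$, but lands on the same substitutions).
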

\begin{proof}
We claim that if $G=\{\alpha_1\rightarrow \alpha_1\alpha_4,~\alpha_2\rightarrow \alpha_2\alpha_3,~\alpha_3\rightarrow \alpha_1\alpha_2,~\alpha_4\rightarrow \alpha_1\alpha_2,~M\rightarrow sM\alpha_3\}$,
then we have
\begin{equation}\label{DGMs}
D_G^n(M\alpha_1)=MA_{n+1}(\alpha_1,\alpha_2,\alpha_3,\alpha_4,s).
\end{equation}
Assume that permutations are prepended and appended by $0$.
We now give a grammatical labeling on permutations to generate the generalized Eulerian polynomials:
\begin{itemize}
  \item [$(i)$] If $\pi(i)=n$, we label it as $^{\alpha_1}n^{M}$;
  \item [$(ii)$] If $\pi(i)$ is a peak and $\pi(i)\neq n$, we label it as $^{\alpha_1}\pi(i)^{\alpha_2}$;
  \item [$(iii)$] If $\pi(i)$ is a double ascent, we put a superscript $\alpha_3$ just before $\pi(i)$;
  \item [$(iv)$] If $\pi(i)$ is a double descent, we put a superscript $\alpha_4$ right after $\pi(i)$;
  \item [$(v)$] If $\pi(i)$ is a simsun succession, we put a subscript $s$ right after $\pi(i)$.
\end{itemize}
With this labeling, the weight of $\pi$ is given as follows:
$$M\alpha_1^{\pk(\pi)}\alpha_2^{\val(\pi)}\alpha_3^{\dasc(\pi)}\alpha_4^{\ddes(\pi)}s^{\ssuc(\pi)}.$$
Note that $\ms_1=\{^{\alpha_1}1^{M}\}$ and $\ms_2=\{^{\alpha_3}1_s^{\alpha_1}2^{M},~^{\alpha_1}2^{M}1^{\alpha_4}\}$.
Then the sum of weights of the elements in $\ms_2$ is given by $D_G(M\alpha_1)$. We now present an example to illustrate the general case. Let $\pi=134265\in\ms_6$,
The grammatical labeling of $\pi$ is given as follows:
$$^{\alpha_3}1_s^{\alpha_3}3_s^{\alpha_1}4^{\alpha_2}2^{\alpha_1}6^M5^{\alpha_4}.$$
When we insert $7$ into $\pi$, the generated weighted permutations and their corresponding substitution rules are listed as follows:
\begin{align*}
&^{\alpha_1}7^{M}1_s^{\alpha_3}3_s^{\alpha_1}4^{\alpha_2}2^{\alpha_1}6^{\alpha_2}5^{\alpha_4}\leftrightarrow \alpha_3\rightarrow \alpha_1\alpha_2;\\
&^{\alpha_3}1_s^{\alpha_1}7^{M}3_s^{\alpha_1}4^{\alpha_2}2^{\alpha_1}6^{\alpha_2}5^{\alpha_4}\leftrightarrow \alpha_3\rightarrow \alpha_1\alpha_2;\\
&^{\alpha_3}1_s^{\alpha_3}3_s^{\alpha_1}7^{M}4^{\alpha_4}2^{\alpha_1}6^{\alpha_2}5^{\alpha_4}\leftrightarrow \alpha_1\rightarrow \alpha_1\alpha_4;\\
&^{\alpha_3}1_s^{\alpha_3}3_s^{\alpha_3}4^{\alpha_1}7^M2^{\alpha_1}6^{\alpha_2}5^{\alpha_4}\leftrightarrow \alpha_2\rightarrow \alpha_2\alpha_3;\\
&^{\alpha_3}1_s^{\alpha_3}3_s^{\alpha_1}4^{\alpha_2}2^{\alpha_1}7^M6^{\alpha_4}5^{\alpha_4}\leftrightarrow \alpha_1\rightarrow \alpha_1\alpha_4;\\
&^{\alpha_3}1_s^{\alpha_3}3_s^{\alpha_1}4^{\alpha_2}2^{\alpha_3}6_s^{\alpha_1}7^M5^{\alpha_4}\leftrightarrow M\rightarrow sM\alpha_3;\\
&^{\alpha_3}1_s^{\alpha_3}3_s^{\alpha_1}4^{\alpha_2}2^{\alpha_1}6^{\alpha_2}5^{\alpha_1}7^M\leftrightarrow \alpha_4\rightarrow \alpha_1\alpha_2.
\end{align*}
Each insertion of $7$ corresponds to one substitution rule in $G$.
Continuing in this way, we can eventually generated all the weighted elements in $\msn$. This completes the proof of~\eqref{DGMs}.

Note that $$D_G(M\alpha_1)=M\alpha_1(s\alpha_3+\alpha_4),~D_G(s\alpha_3+\alpha_4)=(1+s)\alpha_1\alpha_2,$$
$$D_G(\alpha_1\alpha_2)=\alpha_1\alpha_2(\alpha_3+\alpha_4),~D_G(\alpha_3+\alpha_4)=2\alpha_1\alpha_2.$$
We make a change of variables. Setting $a=M\alpha_1,~b=s\alpha_3+\alpha_4,~u=\alpha_1\alpha_2$ and $v=\alpha_3+\alpha_4$, we get the following grammar:
$$G'=\{a\rightarrow ab,~b\rightarrow (1+s)u,~u\rightarrow uv,~v\rightarrow 2u\}.$$
Consider a change of the grammar $G'$. Set $a=I, b=pq$, where $p=\frac{b}{1+s},~q=1+s$, then
we get the grammar $G_2$ defined by~\eqref{G2}.
Substituting $I=\alpha_1,~p=\frac{s\alpha_3+\alpha_4}{1+s},~q=1+s,~u=\alpha_1\alpha_2$ and $v=\alpha_3+\alpha_4$ into~\eqref{DG2},
we immediately get the desired expression. This completes the proof.
\end{proof}

\begin{corollary}
We have
$$A_{n+1}(\alpha_1,\alpha_2,\alpha_3,\alpha_4,0)=\sum_{i=0}^n
\alpha_4^i\sum_{j=0}^{\lrf{(n-i)/2}}\gamma_{n,i,j}(1)\alpha_1^{j+1}\alpha_2^j(\alpha_3+\alpha_4)^{n-i-2j},$$
where $\gamma_{n,i,j}(1)=\#\{\pi\in\msn: \cda(\pi)=0,~\fix(\pi)=i,~\exc(\pi)=j\}$.
\end{corollary}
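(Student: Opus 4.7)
The plan is to derive this corollary as a direct specialization of Theorem~\ref{thms} at $s=0$. First, setting $s=0$ in the $\gamma$-type expansion
$$A_{n+1}(\alpha_1,\alpha_2,\alpha_3,\alpha_4,s)=\sum_{i=0}^n\left(\frac{s\alpha_3+\alpha_4}{s+1}\right)^i\sum_{j=0}^{\lrf{(n-i)/2}}\gamma_{n,i,j}(s+1)\alpha_1^{j+1}\alpha_2^j(\alpha_3+\alpha_4)^{n-i-2j}$$
collapses the outer factor $\bigl(\tfrac{s\alpha_3+\alpha_4}{s+1}\bigr)^i$ to $\alpha_4^i$ and replaces $\gamma_{n,i,j}(s+1)$ by $\gamma_{n,i,j}(1)$, which yields the polynomial identity part of the corollary immediately.

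Second, I would invoke the combinatorial description of $\gamma_{n,i,j}(q)$ provided by Lemma~\ref{G1G2}, namely
$$\gamma_{n,i,j}(q)=\sum_{\pi\in\ms_{n,i,j}}q^{\cyc(\pi)},\qquad \ms_{n,i,j}=\{\pi\in\msn:\cda(\pi)=0,~\fix(\pi)=i,~\exc(\pi)=j\}.$$
Setting $q=1$ gives $\gamma_{n,i,j}(1)=\#\ms_{n,i,j}$, which is exactly the combinatorial interpretation asserted in the statement.

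Since both ingredients are already established earlier in the paper, there is essentially no obstacle to the proof beyond bookkeeping; the only thing to double-check is that the denominator $(s+1)^i$ on the right-hand side of Theorem~\ref{thms} is removable at $s=0$, which it is, so no $0/0$ indeterminacy arises. As a combinatorial sanity check, one may note that the left-hand side $A_{n+1}(\alpha_1,\alpha_2,\alpha_3,\alpha_4,0)$ is the generating polynomial, with respect to $(\pk,\val,\dasc,\ddes)$, of those permutations in $\ms_{n+1}$ having no simsun succession, so the corollary exhibits a partial $\gamma$-expansion for this restricted family in terms of the fixed-point/excedance enumerators of permutations in $\msn$ with no cycle double ascent.
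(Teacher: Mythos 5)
Your proposal is correct and follows exactly the intended route: specialize Theorem~\ref{thms} at $s=0$, which turns the factor $\bigl(\tfrac{s\alpha_3+\alpha_4}{s+1}\bigr)^i$ into $\alpha_4^i$ and $\gamma_{n,i,j}(s+1)$ into $\gamma_{n,i,j}(1)$, and then read off the combinatorial meaning of $\gamma_{n,i,j}(1)$ from the identity $\gamma_{n,i,j}(q)=\sum_{\pi\in\ms_{n,i,j}}q^{\cyc(\pi)}$ in Lemma~\ref{G1G2}. The paper states this corollary without further proof precisely because it is this immediate specialization, so there is nothing to add.
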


Let
\begin{equation*}\label{eq:proof3}
\gamma=\gamma(x,p,q;z)=1+\sum_{n=1}^\infty\sum_{i=0}^np^i\sum_{j=0}^{\lrf{(n-i)/2}}\gamma_{n,i,j}(q)x^j\frac{z^n}{n!}.
\end{equation*}
According to~\cite[Eq~(13)]{Ma2401}, we have
\begin{equation*}\label{Csxz-explicit}
\gamma(x,p,q;z)=e^{z\left(p-\frac{1}{2}\right)q}\left(\frac{\sqrt{1-4x}}{\sqrt{1-4x}\cosh\left(\frac{z}{2}\sqrt{1-4x}\right)-\sinh\left(\frac{z}{2}\sqrt{1-4x}\right)}\right)^q.
\end{equation*}
Define
$$A(\alpha_1,\alpha_2,\alpha_3,\alpha_4,s;z)=\sum_{n=0}^\infty \frac{1}{\alpha_1}A_{n+1}(\alpha_1,\alpha_2,\alpha_3,\alpha_4,s)\frac{z^n}{n!}.$$
By Theorem~\ref{thms}, we get the following.
\begin{corollary}
We have
\begin{align*}
A(\alpha_1,\alpha_2,\alpha_3,\alpha_4,s;z)&=\gamma\left(\frac{\alpha_1\alpha_2}{(\alpha_3+\alpha_4)^2},\frac{s\alpha_3+
\alpha_4}{(s+1)(\alpha_3+\alpha_4)},1+s;(\alpha_3+\alpha_4)z\right)\\
&=1+(s\alpha_3+\alpha_4)z+(\alpha_1\alpha_2(1+s)+(s\alpha_3+\alpha_4)^2)\frac{z^2}{2!}+\cdots.
\end{align*}
\end{corollary}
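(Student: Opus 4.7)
The plan is to derive the closed form directly from Theorem~\ref{thms}, which provides the partial $\gamma$-expansion of $A_{n+1}(\alpha_1,\alpha_2,\alpha_3,\alpha_4,s)$ in terms of the coefficients $\gamma_{n,i,j}(q)$. Since $A(\alpha_1,\alpha_2,\alpha_3,\alpha_4,s;z)$ is defined as $\sum_{n\geqslant 0}(A_{n+1}/\alpha_1)(z^n/n!)$ and $\gamma(x,p,q;z)$ is essentially the exponential generating function of the same array $\gamma_{n,i,j}(q)$, the target identity should follow from a matching of parameters together with a rescaling of $z$.

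Concretely, I would first divide the identity in Theorem~\ref{thms} by $\alpha_1$, using $\alpha_1^{j+1}\alpha_2^j/\alpha_1=(\alpha_1\alpha_2)^j$, to obtain
$$\frac{A_{n+1}(\alpha_1,\alpha_2,\alpha_3,\alpha_4,s)}{\alpha_1}=\sum_{i=0}^n\left(\frac{s\alpha_3+\alpha_4}{s+1}\right)^i\sum_{j=0}^{\lrf{(n-i)/2}}\gamma_{n,i,j}(s+1)(\alpha_1\alpha_2)^j(\alpha_3+\alpha_4)^{n-i-2j}.$$
Next I would introduce $x=\alpha_1\alpha_2/(\alpha_3+\alpha_4)^2$, $p=(s\alpha_3+\alpha_4)/((s+1)(\alpha_3+\alpha_4))$ and $q=1+s$, and rewrite $(\alpha_1\alpha_2)^j=x^j(\alpha_3+\alpha_4)^{2j}$ together with $((s\alpha_3+\alpha_4)/(s+1))^i=p^i(\alpha_3+\alpha_4)^i$. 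Then the factor $(\alpha_3+\alpha_4)^{i+2j+(n-i-2j)}=(\alpha_3+\alpha_4)^n$ pops out of the double sum, giving
$$\frac{A_{n+1}}{\alpha_1}=(\alpha_3+\alpha_4)^n\sum_{i=0}^n p^i\sum_{j=0}^{\lrf{(n-i)/2}}\gamma_{n,i,j}(q)x^j.$$
Multiplying by $z^n/n!$ and summing over $n\geqslant 0$, the combined factor $(\alpha_3+\alpha_4)^nz^n/n!=((\alpha_3+\alpha_4)z)^n/n!$ recombines the series into $\gamma(x,p,q;(\alpha_3+\alpha_4)z)$, which is the claimed identity. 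The $n=0$ term on the left, $A_1/\alpha_1=1$, matches the constant $1$ in the definition of $\gamma$, so the indexing is consistent.

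The main obstacle is nothing more than careful bookkeeping of exponents: one must verify that $i+2j+(n-i-2j)=n$ so that the rescaling $z\mapsto(\alpha_3+\alpha_4)z$ absorbs exactly the extra power of $\alpha_3+\alpha_4$. As a sanity check, the low-order expansion is computed directly from Theorem~\ref{thms} using the values $\gamma_{1,1,0}(q)=q$, $\gamma_{2,0,1}(q)=q$ and $\gamma_{2,2,0}(q)=q^2$ (all read off from the combinatorial description of $\ms_{n,i,j}$), which recover $A_2/\alpha_1=s\alpha_3+\alpha_4$ and $A_3/\alpha_1=\alpha_1\alpha_2(1+s)+(s\alpha_3+\alpha_4)^2$, agreeing with the stated expansion of $A(\alpha_1,\alpha_2,\alpha_3,\alpha_4,s;z)$ through order $z^2$.
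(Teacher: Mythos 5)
Your proof is correct and follows essentially the same route as the paper, which simply states that the corollary follows from Theorem~\ref{thms}; you fill in the straightforward algebraic bookkeeping (dividing by $\alpha_1$, the substitutions for $x,p,q$, and the rescaling $z\mapsto(\alpha_3+\alpha_4)z$ absorbing the factor $(\alpha_3+\alpha_4)^n$) and correctly verify the low-order terms using $\gamma_{1,1,0}(q)=q$, $\gamma_{2,0,1}(q)=q$, $\gamma_{2,2,0}(q)=q^2$.
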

\subsection{Six-variable polynomials}
\hspace*{\parindent}

Recall that $$A_n(x,y,p,q)=\sum_{\pi\in\msn}x^{\exc(\pi)}y^{\drop(\pi)}p^{\fix(\pi)}q^{\cyc(\pi)}.$$
Using the {\it exponential formula}, Ksavrelof-Zeng~\cite{Zeng02} found that
\begin{equation*}\label{Anxpq-EGF}
\sum_{n=0}^\infty A_n(x,1,p,q)\frac{z^n}{n!}=\left(\frac{(1-x)\mathrm{e}^{pz}}{\mathrm{e}^{xz}-x\mathrm{e}^{z}}\right)^q.
\end{equation*}
Since $\exc(\pi)+\drop(\pi)+\fix(\pi)=n$ for $\pi\in\msn$, it follows that
\begin{equation}\label{Anxpq-EGF}
\sum_{n=0}^\infty A_n(x,y,p,q)\frac{z^n}{n!}=\left(\frac{(y-x)\mathrm{e}^{pz}}{y\mathrm{e}^{xz}-x\mathrm{e}^{yz}}\right)^q.
\end{equation}

We now provide a generalization of Lemma~\ref{lemmaLMSt}, which can be proved in the same way.
\begin{lemma}\label{LMpq}
If $G=\{L\rightarrow pLt,M\rightarrow qMs,s\rightarrow xy, t\rightarrow xy, x\rightarrow xy,y\rightarrow xy\}$,
then we have
\begin{equation*}
D_{G}^n(LM)=LM\sum_{\pi\in\ms_{n+1}}x^{\basc(\pi)}y^{\des(\pi)-\plrm(\pi)}s^{\suc(\pi)}t^{\plrm(\pi)}p^{\lrm(\pi)-1}q^{\ssuc(\pi)}.
\end{equation*}
\end{lemma}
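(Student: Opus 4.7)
The plan is to reuse the grammatical labeling of $\pi\in\ms_{n+1}$ introduced in the proof of Lemma~\ref{lemmaLMSt} verbatim, so that the product of the $L,M,x,y,s,t$-labels on $\pi$ already equals $LM\cdot x^{\basc(\pi)}y^{\des(\pi)-\plrm(\pi)}s^{\suc(\pi)}t^{\plrm(\pi)}$. The present grammar differs from the one in Lemma~\ref{lemmaLMSt} only by the extra factors $p$ in $L\to pLt$ and $q$ in $M\to qMs$, so the expansion of $D_{G}^n(LM)$ matches that of Lemma~\ref{lemmaLMSt} term by term, up to an extra monomial $p^{\alpha(\pi)}q^{\beta(\pi)}$, where $\alpha(\pi)$ (resp.~$\beta(\pi)$) is the number of times the $L$-rule (resp.~$M$-rule) is used along the canonical sequence of insertions $2,3,\ldots,n+1$ that produces $\pi$.

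Inspecting the six insertion cases enumerated in the proof of Lemma~\ref{lemmaLM} shows that the $L$-rule is triggered exactly when the new maximum $m$ is placed at the very front of the current word, and the $M$-rule exactly when $m$ is placed immediately after the current maximum $m-1$. The theorem therefore reduces to the two identities
\[
\alpha(\pi)=\lrm(\pi)-1,\qquad \beta(\pi)=\ssuc(\pi).
\]
For the first, an easy induction on the insertion step shows that $\lrm$ increases by $1$ exactly when $m$ is placed at the front (it is then itself a new left-to-right minimum, and every previous left-to-right minimum is still smaller than all entries preceding it); in every other case $m$ fails to be a left-to-right minimum because some smaller entry precedes it, and no existing left-to-right minimum is destroyed. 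Starting from $\lrm(1)=1$, this yields $\alpha(\pi)=\lrm(\pi)-1$. For the second, $m-1$ becomes a simsun succession precisely in the $M$-rule case, since the condition ``all intermediate entries exceed $m$'' can hold only when there are no intermediate entries at all. Together with $\ssuc(1)=0$, this gives $\beta(\pi)=\ssuc(\pi)$, provided the simsun-succession status of every $k\le m-2$ is unaffected by the insertion.

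The main obstacle is precisely this last preservation claim: one has to check that in each of the six insertion patterns the simsun-succession status of any value $k\le m-2$ is unchanged. This follows from the observation that a given $k$ is a simsun succession iff $k+1$ lies to the right of $k$ and every entry strictly between them exceeds $k+1$; the relative order of the old entries is preserved under insertion, and the only new entry $m$ satisfies $m>k+1$, so it neither breaks nor creates the simsun-succession condition at $k$. Once this is checked, the remaining verifications (the base case $n=0$ and the bookkeeping for the rules $s,t,x,y\to xy$) are identical to those in the proof of Lemma~\ref{lemmaLMSt}, completing the argument.
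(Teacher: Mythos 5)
Your proposal is correct and takes essentially the same grammatical-labeling route the paper intends (the paper itself only says Lemma~\ref{LMpq} ``can be proved in the same way'' as Lemma~\ref{lemmaLMSt}). You correctly identify that the $L$-rule fires exactly when the new maximum is inserted at the front and the $M$-rule exactly when it is inserted directly after the previous maximum, and your inductive verifications that these events increment $\lrm$ and $\ssuc$ respectively (while leaving the status of smaller values unchanged, since the inserted entry exceeds every relevant threshold) are exactly the bookkeeping that the paper leaves implicit.
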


\begin{theorem}\label{thm28}
We have
\begin{equation*}\label{bascssuc}
\sum_{\pi\in\ms_{n+1}}x^{\basc(\pi)}y^{\des(\pi)-\plrm(\pi)}s^{\suc(\pi)}t^{\plrm(\pi)}p^{\lrm(\pi)-1}q^{\ssuc(\pi)}=A_n\left(x,y,\frac{pt+qs}{p+q},p+q\right),
\end{equation*}
which implies that $(\suc,\plrm)$ and $(\lrm(\pi)-1,\ssuc)$ are both symmetric distribution.
In particular,
\begin{equation*}
\sum_{\pi\in\ms_{n+1}}s^{\suc(\pi)}t^{\plrm(\pi)}p^{\lrm(\pi)-1}q^{\ssuc(\pi)}=\sum_{\pi\in\msn}{\left(\frac{pt+qs}{p+q}\right)}^{\fix(\pi)}(p+q)^{\cyc(\pi)},
\end{equation*}
\begin{equation*}\label{excfix}
\sum_{\substack{\pi\in\ms_{n+1}\\\ssuc(\pi)=0}}x^{\basc(\pi)}t^{\plrm(\pi)}=\sum_{\pi\in\msn}x^{\exc(\pi)}t^{\fix(\pi)},
\end{equation*}
\begin{equation*}\label{bascsimsun}
\sum_{\pi\in\ms_{n+1}}x^{\basc(\pi)}p^{\lrm(\pi)-1}q^{\ssuc(\pi)}=\sum_{\pi\in\msn}x^{\exc(\pi)}(p+q)^{\cyc(\pi)}.
\end{equation*}
\end{theorem}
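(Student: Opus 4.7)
The plan is to prove the main identity by a change of grammars applied to Lemma~\ref{LMpq}, then obtain the three specializations by substituting specific values of $x,y,s,t,p,q$. The setup is already in place: Lemma~\ref{LMpq} provides a grammatical representation of the left-hand side, while Lemma~\ref{G1G2} provides a grammatical representation of $A_n(x,y,p,q)$. We only need to bridge the two grammars via a suitable substitution.

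The first step is to observe that in the grammar
$$G=\{L\rightarrow pLt,\ M\rightarrow qMs,\ s\rightarrow xy,\ t\rightarrow xy,\ x\rightarrow xy,\ y\rightarrow xy\}$$
from Lemma~\ref{LMpq}, the symbols $p$ and $q$ never appear on the left-hand side of any substitution rule, so $D_G(p)=D_G(q)=0$. Hence they behave as constants. I will introduce the new variables
$$I=LM,\qquad P=\frac{pt+qs}{p+q},\qquad Q=p+q.$$
A direct computation gives
\begin{align*}
D_G(I) &= D_G(L)M+LD_G(M) = LM(pt+qs) = I\cdot PQ,\\
D_G(P) &= \frac{pD_G(t)+qD_G(s)}{p+q} = \frac{(p+q)xy}{p+q} = xy,\\
D_G(x) &= xy,\qquad D_G(y) = xy,
\end{align*}
while $Q$ is a constant. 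Thus in the new variables the grammar $G$ is identified with the grammar $G_1$ of Lemma~\ref{G1G2} (with $p,q$ there replaced by $P,Q$). Applying that lemma yields
$$D_G^n(LM) = I\cdot A_n(x,y,P,Q) = LM\cdot A_n\!\left(x,y,\frac{pt+qs}{p+q},p+q\right).$$
Combining with the expansion of $D_G^n(LM)$ supplied by Lemma~\ref{LMpq} and cancelling the factor $LM$ establishes the main identity.

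The three listed corollaries then follow by specialisation. Setting $x=y=1$ kills the factors $x^{\basc}y^{\des-\plrm}$ on the left and collapses $A_n(1,1,P,Q)$ to $\sum_{\pi\in\msn}P^{\fix(\pi)}Q^{\cyc(\pi)}$, which gives the first corollary and hence the symmetry of the pairs $(\suc,\plrm)$ and $(\lrm-1,\ssuc)$. Setting $p=1$, $q=0$, $y=1$ forces $\ssuc(\pi)=0$ on the left (and hence $\suc(\pi)=0$), while the right-hand side becomes $A_n(x,1,t,1)=\sum_{\pi\in\msn}x^{\exc(\pi)}t^{\fix(\pi)}$. Setting $s=t=1$, $y=1$ turns $\frac{pt+qs}{p+q}$ into $1$, so the right-hand side reduces to $\sum_{\pi\in\msn}x^{\exc(\pi)}(p+q)^{\cyc(\pi)}$.

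The only non-trivial step is the change-of-grammars verification, and the main thing to be careful about is treating $p$ and $q$ consistently as constants under $D_G$ — an inspection of the rules of $G$ confirms this. Everything else is a routine application of the lemmas already established.
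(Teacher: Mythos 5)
Your proof is correct and takes essentially the same route as the paper's: a change of grammars collapsing Lemma~\ref{LMpq}'s grammar to $G_1$ of Lemma~\ref{G1G2} via $I=LM$, $P=\frac{pt+qs}{p+q}$, $Q=p+q$, followed by routine specializations. You merely spell out in more detail (the verification that $p,q$ are constants and the explicit derivation of each corollary) what the paper compresses into a two-line remark.
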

\begin{proof}
Let $G$ be the grammar given by Lemma~\ref{LMpq}. Note that $D_G(LM)=LM(pt+qs)$ and $D_G(pt+qs)=(p+q)xy$.
Letting $LM\rightarrow I$, $\frac{pt+qs}{p+q}\rightarrow p$ and $p+q\rightarrow q$,
we obtain the substitution rules defined by~\eqref{G1}.
By Lemma~\eqref{G1G2} and~\eqref{Anxpq-EGF}, we immediately get the desired expression.
This completes the proof.
\end{proof}

By~\cite[Theorem~3.6]{Ma2401}, we see that if $t\in [0,1]$ and $q\in[-1,0]$, then the following two polynomials are alternatingly increasing:
$$\sum_{\substack{\pi\in\ms_{n}\\\ssuc(\pi)=0}}x^{\basc(\pi)}t^{\plrm(\pi)},~\sum_{\pi\in\ms_{n+1}}x^{\basc(\pi)}q^{\ssuc(\pi)}.$$
Combining~\eqref{Anxpq-EGF} and Theorem~\ref{thm28}, we can give the following generalization of~\eqref{A-EGF}.
\begin{corollary}\label{Corend}
We have
\begin{align*}
&\sum_{n=0}^\infty\sum_{\pi\in\ms_{n+1}}x^{\basc(\pi)}y^{\des(\pi)-\plrm(\pi)}s^{\suc(\pi)}t^{\plrm(\pi)}p^{\lrm(\pi)-1}q^{\ssuc(\pi)}\frac{z^n}{n!}\\
&=\mathrm{e}^{(pt+qs)z}\left(\frac{y-x}{y\mathrm{e}^{xz}-x\mathrm{e}^{yz}}\right)^{p+q}\\
&=1+(qs+pt)z+\left((qs+pt)^2+(p+q)xy\right)\frac{z^2}{2}+\\
&\left((qs+pt)^3+3(p+q)(qs+pt)xy+(p+q)xy(x+y)\right)\frac{z^3}{3!}+\cdots.
\end{align*}
\end{corollary}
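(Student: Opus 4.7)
The plan is to derive the generating function identity directly from Theorem~\ref{thm28} together with the exponential generating function~\eqref{Anxpq-EGF} for the $\fix$ and $\cyc$ $(p,q)$-Eulerian polynomials $A_n(x,y,p,q)$. Since Theorem~\ref{thm28} already establishes the polynomial identity
\begin{equation*}
\sum_{\pi\in\ms_{n+1}}x^{\basc(\pi)}y^{\des(\pi)-\plrm(\pi)}s^{\suc(\pi)}t^{\plrm(\pi)}p^{\lrm(\pi)-1}q^{\ssuc(\pi)}=A_n\!\left(x,y,\tfrac{pt+qs}{p+q},\,p+q\right),
\end{equation*}
the first step is simply to multiply both sides by $z^n/n!$ and sum over $n\geqslant 0$ to convert this into a generating function statement.

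The second step is to invoke~\eqref{Anxpq-EGF} with the substitutions $p\mapsto (pt+qs)/(p+q)$ and $q\mapsto p+q$. This yields
\begin{equation*}
\sum_{n=0}^\infty A_n\!\left(x,y,\tfrac{pt+qs}{p+q},p+q\right)\frac{z^n}{n!}=\left(\frac{(y-x)\exp\!\left(\tfrac{pt+qs}{p+q}z\right)}{y\mathrm{e}^{xz}-x\mathrm{e}^{yz}}\right)^{p+q}.
\end{equation*}
Pulling the exponential factor outside the $(p+q)$-th power transforms $\exp\!\left(\tfrac{pt+qs}{p+q}z\right)^{p+q}$ into $\mathrm{e}^{(pt+qs)z}$, producing exactly the closed form claimed in the corollary. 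The initial terms in the displayed expansion can then be verified by expanding $\mathrm{e}^{(pt+qs)z}$ and the geometric-type factor up to order $z^3$; this is a routine check against the low-order values of $A_n(x,y,p,q)$ (noting $A_0=1$, $A_1=p$, $A_2=pq+p^2$, etc.) after the parameter substitution.

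There is essentially no obstacle here beyond bookkeeping: the entire content of the corollary is the algebraic simplification $\bigl(\mathrm{e}^{cz/d}\bigr)^{d}=\mathrm{e}^{cz}$ applied to~\eqref{Anxpq-EGF} after the substitution dictated by Theorem~\ref{thm28}. The only point worth double-checking is that the substitution of $(p+q)$ for the cycle-tracking parameter is legitimate when $p+q=0$, but since both sides are polynomials in $p,q$ of bounded degree in each fixed $n$, the identity holds as a polynomial identity and the degenerate value causes no issue (the $(p+q)$ in the denominator of $(pt+qs)/(p+q)$ cancels once the exponential is expanded).
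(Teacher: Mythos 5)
Your proposal follows exactly the paper's route: Theorem~\ref{thm28} combined with the exponential generating function~\eqref{Anxpq-EGF} for $A_n(x,y,p,q)$, after the parameter substitution $p\mapsto (pt+qs)/(p+q)$, $q\mapsto p+q$, and the simplification $\bigl(\mathrm{e}^{(pt+qs)z/(p+q)}\bigr)^{p+q}=\mathrm{e}^{(pt+qs)z}$. One small slip in your parenthetical verification check: with the paper's definition $A_n(x,y,p,q)=\sum_{\pi\in\msn}x^{\exc(\pi)}y^{\drop(\pi)}p^{\fix(\pi)}q^{\cyc(\pi)}$ one has $A_1=pq$ and $A_2=p^2q^2+xyq$ (not $p$ and $pq+p^2$), though this does not affect the validity of the argument itself.
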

\bibliographystyle{amsplain}

\end{document}